\newtheorem{theorem}{Theorem}
\newtheorem{claim}[theorem]{Claim}
\newtheorem{corollary}[theorem]{Corollary}
\newtheorem{lemma}[theorem]{Lemma}
\newtheorem{proposition}[theorem]{Proposition}
\theoremstyle{definition}
\newtheorem{definition}[theorem]{Definition}
\theoremstyle{remark}
\newtheorem{remark}[theorem]{Remark}
\numberwithin{theorem}{section}
\numberwithin{equation}{section}
\def\XXint#1#2#3{{\setbox0=\hbox{$#1{#2#3}{\int}$}
     \vcenter{\hbox{$#2#3$}}\kern-.5\wd0}}
\newcommand{\dd}{\; \mathrm{d}}
\newcommand{\bbH}{\mathbb{H}}
\newcommand{\bbR}{\mathbb{R}}
\newcommand{\bbN}{\mathbb{N}}
\newcommand{\R}{\mathbb{R}}
\begin{document}
\title[$C^{m,\omega}$ Whitney extension for curves in the Heisenberg group]{A $C^{m,\omega}$ Whitney Extension Theorem for Horizontal Curves in the Heisenberg Group}

\author[Gareth Speight]{Gareth Speight}
\address[Gareth Speight]{Department of Mathematical Sciences, University of Cincinnati, 2815 Commons Way, Cincinnati, OH 45221, United States}
\email[Gareth Speight]{Gareth.Speight@uc.edu}

\author[Scott Zimmerman]{Scott Zimmerman}
\address[Scott Zimmerman]{Department of Mathematics, The Ohio State University at Marion, 1465 Mt Vernon Ave, Marion, OH 43302, United States}
\email[Scott Zimmerman]{zimmerman.416@osu.edu}

\keywords{Heisenberg group, horizontal curve, Whitney extension theorem}

\date{\today}

\begin{abstract}
We characterize which mappings from a compact subset of $\mathbb{R}$ into the Heisenberg group can be extended to a $C^{m,\omega}$ horizontal curve for a given modulus of continuity $\omega$. We motivate our characterization by showing that the $C^{m,\omega}$ extension property fails if we instead use a more direct analogue of the conditions from the $C^{m}$ case.
\end{abstract}

\maketitle

\section{Introduction}

The Whitney extension theorem \cite{Whi34} characterizes those collections of real-valued continuous functions $F=(F^{k})_{|k|\leq m}$ defined on a compact set $K\subset \bbR^{n}$ that can be extended to a $C^{m}$ function $f$ such that the derivatives $D^{k}f$ coincide with $F^{k}$ on the set $K$. The key condition is that $F=(F^{k})_{|k|\leq m}$ must form a Whitney field, which encodes the fact that Taylor's theorem must hold if $F=(F^{k})_{|k|\leq m}$ are to extend to a $C^{m}$ mapping. There 
have been a variety of versions of Whitney's theorem for mappings with different regularity \cite{Bie80, Feff1, Feff2, Feff3, Feff4, Folland} or between different spaces \cite{FSS01, FSS03, FSS03b, JS16, SS18, VP06, Zim18, Zim21}. Whitney extension results are applied to study rectifiable sets, construct mappings with desired differentiability properties, or prove Lusin approximation results \cite{CPS21a, CPS21b, Fed69, LS16, Spe14, Whi35}. In \cite{PSZ19} the present authors together with Pinamonti proved a Whitney extension theorem for $C^{m}$ horizontal curves in the Heisenberg group. In this paper, we refine the techniques from \cite{PSZ19} to prove a Whitney extension theorem for $C^{m,\omega}$ horizontal curves, where $\omega$ is a modulus of continuity,
and we explain why the assumptions from \cite{PSZ19} are not sufficient in the $C^{m,\omega}$ case.

Carnot groups are Lie groups whose Lie algebra admits a stratification that gives rise to dilations and ensures that points can be connected by horizontal curves i.e. absolutely continuous curves with tangents in a distinguished subbundle of the tangent bundle. The Carnot-Caratheodory distance is defined by infimizing over the lengths of such curves
and equips every Carnot group with a natural left-invariant metric. 
In recent years, it has become clear that a large part of geometric analysis, geometric measure theory, and real analysis in Euclidean spaces may be generalized to the Carnot group setting. See, for example, \cite{BLU07, CDPT07, FSS01, FSS03, LPS17, Mon02, MPS17, Pan89, PS16, PS18, PSZ19}. 

The present paper focuses on the first Heisenberg group $\mathbb{H}^{1}$ (Definition \ref{Heisenberg}), which is the simplest and most often studied non-Euclidean Carnot group. It can be viewed in coordinates as $\mathbb{R}^{3}$ with a two-dimensional horizontal distribution. (While we expect our results to hold in the Heisenberg group $\mathbb{H}^{n}$ of any dimension, we will focus our attention on the first Heisenberg group $\mathbb{H}^{1}$ to keep the notation more manageable.) As mentioned above, the current authors together with Pinamonti \cite{PSZ19} characterized when a trio of collections of real valued continuous functions $(F^{k},G^{k},H^{k})_{k=0}^{m}$ can be extended to a $C^{m}$ horizontal curve in $\mathbb{H}^{1}$. There were three main conditions in the characterization. Firstly, since each of $(F^{k})_{k=0}^{m}$, $(G^{k})_{k=0}^{m}$, $(H^{k})_{k=0}^{m}$ must extend in particular to a $C^{m}$ map from $\mathbb{R}$ to $\mathbb{R}$, they must be Whitney fields of class $C^{m}$ according to Whitney's original theorem. Secondly, if the trio does indeed extend to a horizontal $C^{m}$ map, then differentiating the horizontality condition (Lemma \ref{lift}) and restricting to $K$ gives equations which must necessarily hold for each map $H^{k}$ in terms of $(F^{k})_{k=0}^{m}$ and $(G^{k})_{k=0}^{m}$. Finally, the horizontality condition shows that changes in the third component (i.e. the height) correspond to changes in the areas swept out by the first two components in the plane. This is encoded via the Taylor polynomial in the area-velocity condition described in \cite{PSZ19}.

In the present paper we characterize when a trio of collections $(F^{k},G^{k},H^{k})_{k=0}^{m}$ of continuous functions on a compact set $K\subset \mathbb{R}$ can be extended to a $C^{m,\omega}$ horizontal curve in $\mathbb{H}^{1}$, where $\omega$ is a concave modulus of continuity (Definition \ref{defmodulus}). For instance, the choice $\omega(t)=t$ corresponds to those maps which are $C^{m}$ and whose highest order derivatives are Lipschitz. Our main result is the following. For most definitions, see Section \ref{preliminaries}; the quantities $A(a,b)$ and $V_\omega (a,b)$ are defined in \eqref{Aab} and \eqref{Vab} respectively.

\begin{theorem}\label{iffomega}
Suppose $\omega$ is a modulus of continuity.
Let $K\subset \bbR$ be a compact set and $F=(F^{k})_{k=0}^{m}$, $G=(G^{k})_{k=0}^{m}$, $H=(H^{k})_{k=0}^{m}$ be collections of continuous, real-valued functions on $K$. Then there is a horizontal curve $\Gamma \in C^{m,\omega}(\mathbb{R},\mathbb{R}^3)$ such that 
$D^k \Gamma|_K = (F^k,G^k,H^k)$ for $0\leq k \leq m$ 
if and only if
\begin{enumerate}
\item $F$, $G$, and $H$ are Whitney fields of class $C^{m,\omega}$ on $K$,
\item for every $1 \leq k \leq m$ the following holds on $K$,
\[        H^k = 2 \sum_{i=0}^{k-1}  \binom{k-1}{i}  \left(F^{k-i}G^i- G^{k-i}F^i \right),\]
\item and there is a constant $C\geq 1$ such that
\[ \left| \frac{A(a,b)}{V_{\omega}(a,b)}\right| \leq C \qquad \mbox{for all }a,b\in K \mbox{ with }a<b.\]
\end{enumerate}
\end{theorem}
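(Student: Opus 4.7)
The plan is to prove both directions of the equivalence. For the necessity direction, suppose $\Gamma=(\gamma_1,\gamma_2,\gamma_3)\in C^{m,\omega}(\mathbb{R},\mathbb{R}^3)$ is a horizontal extension agreeing with the data on $K$. Condition (1) follows immediately by applying the classical Whitney extension theorem for $C^{m,\omega}$ componentwise. For (2), the horizontality identity for $\dot\gamma_3$ in terms of $\gamma_1,\gamma_2,\dot\gamma_1,\dot\gamma_2$ can be differentiated $k-1$ times via the Leibniz rule, and then restricted to $K$, yielding the stated formula for $H^k$. Condition (3) is the quantitative heart of the necessity argument: using that $\Gamma\in C^{m,\omega}$ one controls the Taylor remainders $R^k(a,b)$ for each component by $\lesssim |b-a|^{m-k}\omega(|b-a|)$, which, inserted into the definitions \eqref{Aab} and \eqref{Vab}, produces a uniform bound on $|A(a,b)/V_\omega(a,b)|$.

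For sufficiency, I would follow and refine the strategy of \cite{PSZ19}. First, apply the classical $C^{m,\omega}$ Whitney theorem componentwise to $F$, $G$, and $H$ to obtain $C^{m,\omega}$ functions $f,g,h$ on $\mathbb{R}$ realizing the prescribed jets on $K$. Condition (2) guarantees horizontality at every point of $K$, but in general the candidate curve $(f,g,h)$ fails to be horizontal on the complementary open set $\mathbb{R}\setminus K$. On each bounded component $(a,b)\subset\mathbb{R}\setminus K$ with $a,b\in K$, I would modify $f$ and $g$ to new $C^{m,\omega}$ functions $\tilde f,\tilde g$ that agree with $f$ and $g$ to order $m$ at the endpoints $a$ and $b$, and whose signed enclosed area over $[a,b]$ differs from that of $(f,g)$ by exactly the amount needed to make the horizontal lift
\[
\tilde h(t)=h(a)+2\int_a^t\bigl(\tilde g(s)\dot{\tilde f}(s)-\tilde f(s)\dot{\tilde g}(s)\bigr)\dd s
\]
match $h$ and all its prescribed derivatives at $b$. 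On unbounded components only one endpoint must be matched, so a simpler construction suffices; the pieces are then glued together on $\mathbb{R}$.

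The principal obstacle is carrying out this area adjustment on each bounded gap while keeping the modulus of continuity of the $m$-th derivatives of $\tilde f,\tilde g$ bounded by $C\omega$ for a constant $C$ independent of the gap. Concretely, this requires, for each pair $a<b$ in $K$, the construction of a pair of $C^{m,\omega}([a,b],\mathbb{R})$ functions with prescribed $m$-jets at both endpoints and with a prescribed extra signed enclosed area equal to $A(a,b)$, such that their $m$-th derivatives have $\omega$-modulus uniformly controlled. The quantity $V_\omega(a,b)$ is calibrated so that, up to multiplicative constants, it represents the largest extra area achievable by such a modification under a fixed $C^{m,\omega}$ bound on $[a,b]$; thus hypothesis (3), which bounds $|A(a,b)/V_\omega(a,b)|$ uniformly, is exactly what makes the required modifications simultaneously feasible.

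Once the modifications are in place, one assembles $\tilde\Gamma=(\tilde f,\tilde g,\tilde h)$ on $\mathbb{R}$, horizontal by construction, and verifies it is globally of class $C^{m,\omega}$. The nontrivial verification occurs at points of $K$ that are accumulated by gaps: here one uses that on each gap the modification shares $m$-jets with $(f,g,h)$ at its endpoints and has $C^{m,\omega}$ norm bounded independently of the gap, together with standard Whitney-type gluing estimates comparing the $m$-th derivative at nearby points via the modulus $\omega$. I expect the bulk of the technical work, and the step most sensitive to the $C^{m,\omega}$ refinement over the pure $C^m$ setting of \cite{PSZ19}, to be the quantitative construction of the area-adjusting curves on gaps and the verification that the uniform bound coming from (3) translates into a uniform $C^{m,\omega}$ bound for the assembled extension.
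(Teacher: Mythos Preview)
Your proposal is correct and follows essentially the same strategy as the paper: necessity via Taylor remainder estimates (after reducing to $f(a)=g(a)=h(a)=0$ by left-invariance), and sufficiency by taking classical $C^{m,\omega}$ Whitney extensions of $F$ and $G$, perturbing on each gap $(a_i,b_i)$ by compactly supported bumps to realize the required area $A(a_i,b_i)$, defining the third coordinate as the horizontal lift, and then gluing. The paper's only substantive deviations from your sketch are that it never extends $H$ via Whitney (the vertical component is always built from the lift, with condition~(2) ensuring the derivatives match at gap endpoints), and that the perturbation construction (your ``principal obstacle'') is carried out through an explicit case analysis---roughly, whether $\int_a^b|(T_aF)'|$ or $\int_a^b|(T_aG)'|$ dominates $\omega(b-a)(b-a)^m$ or both are small---using Markov-type inequalities for polynomials to locate subintervals where the bumps can act efficiently.
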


As discussed above, condition (1) follows from Whitney's original extension theorem, while condition (2) follows from the horizontality of the extension exactly as before. It is in condition (3) that we see a key difference when compared to the $C^{m}$ case. The area-velocity estimate requires a velocity defined in terms of $\omega$ which vanishes faster than the velocity defined in \cite{PSZ19}. As pointed out in \cite{Zim21}, applying directly the techniques of \cite{PSZ19} yields an extension only to a $C^{m,\sqrt{w}}$ horizontal curve. (See Theorem~\ref{motivation} below.)

The proof of Theorem \ref{iffomega} is similar to that of the $C^{m}$ case in \cite{PSZ19}; we use Euclidean techniques to find a $C^{m,\omega}$ extension of $F$ and $G$ then apply perturbations and a horizontal lift to define an extension of $H$. However, the perturbations and corresponding estimates must be more carefully chosen in the $C^{m,\omega}$ case.

That a new definition of velocity is indeed required is shown by the following, which is our second main result. It is proved in Section~\ref{sectionjustify} by direct construction. The quantity $V_{1}(a,b)$ mentioned here is the velocity term introduced in \cite{PSZ19} and defined in \eqref{Vhatab} below.

\begin{theorem}\label{motivation}
Suppose $\omega$ is a modulus of continuity.
There is a compact and perfect set $K \subset \mathbb{R}$, a constant $\hat{C} > 0$, and collections $F=(F^k)_{k=0}^m$, $G=(G^k)_{k=0}^m$, and $H=(H^k)_{k=0}^m$ of continuous, real valued functions on $K$
such that
\begin{enumerate}
    \item 
    $F$, $G$, and $H$ are Whitney fields of class $C^{m,\omega}$ on $K$,
    \item for every $1 \leq k \leq m$ and $x \in K$, the following holds on $K$:
    $$
        H^k = 2 \sum_{i=0}^{k-1}  \binom{k-1}{i}  \left(F^{k-i}G^i- G^{k-i}F^i \right),
    $$
    \item 
    and 
\begin{equation*}
\left| \frac{A(a,b)}{V_1(a,b)} \right| \leq \hat{C}\omega(b-a)
\quad \text{for all } a,b \in K \text{ with } a<b,
\end{equation*}
\end{enumerate}
but there is no horizontal curve $\Gamma\in C^{m,\omega^\alpha}(\R,\R^3)$ 
satisfying $\Gamma|_K = (F^0,G^0,H^0)$
for any $\alpha \in (\tfrac12,1]$.
\end{theorem}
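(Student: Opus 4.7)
The strategy is to construct a compact perfect set $K$ and Whitney fields $(F, G, H)$ of class $C^{m,\omega}$ with $F^k = G^k \equiv 0$ on $K$ for every $1 \leq k \leq m$ (so that condition (2) is automatic), and with $F^0, G^0, H^0$ specified on $K$ so that, over a sequence of ``bad pairs'' $(a_n, b_n) \in K \times K$ with $\ell_n := b_n - a_n \to 0$, the area defect $A(a_n, b_n)$ sits on the margin permitted by condition (3) but is too large to be realized by any $C^{m,\omega^\alpha}$ horizontal curve when $\alpha > 1/2$.

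The key estimate behind non-extendability is as follows. Since $K$ is perfect and the Whitney field for $F, G$ has $F^k = G^k = 0$ on $K$ for $1 \leq k \leq m$, any $C^{m,\omega^\alpha}$ extension $(\gamma_1, \gamma_2)$ of $F^0, G^0$ satisfies $\gamma_j^{(k)} \equiv 0$ on $K$ for $j = 1, 2$ and $1 \leq k \leq m$ (the difference-quotient limit of $F^0$ along $K$ forced by the Whitney condition is zero, and one iterates). Iterated integration of the $\omega^\alpha$-modulus bound for $\gamma_j^{(m)}$ anchored at $a_n$ then gives, on $[a_n, b_n]$,
\[ |\gamma_j(t) - F^0(a_n)| \leq C\omega(\ell_n)^\alpha \ell_n^m \quad \text{and} \quad |\dot\gamma_j(t)| \leq C\omega(\ell_n)^\alpha \ell_n^{m-1}. \]
Integrating the horizontality identity $\dot\gamma_3 = 2(\gamma_1\dot\gamma_2 - \gamma_2\dot\gamma_1)$ over $[a_n, b_n]$, subtracting the Taylor polynomial boundary contributions, and identifying the result with $A(a_n, b_n)$ then yields the upper bound $|A(a_n, b_n)| \leq C'\omega(\ell_n)^{2\alpha}\ell_n^{2m}$.

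\textbf{Construction.} Pick a rapidly decreasing sequence $\ell_n \to 0$ and build $K$ as a compact perfect generalized Cantor set containing the pairs $(a_n, b_n)$ with $b_n - a_n = \ell_n$, arranged so that the pairs accumulate on a dense subset of $K$. On $K$ set $F^k = G^k \equiv 0$ for $1 \leq k \leq m$ and choose $F^0, G^0$ to be small Whitney data (of size roughly $\omega(\ell_n)\ell_n^m$ at scale $\ell_n$, near-saturating the Whitney condition) calibrated so that $V_1(a_n, b_n) \approx \ell_n^{2m}$. Define $H^0$ on $K$ so that $|A(a_n, b_n)| \approx \omega(\ell_n)\ell_n^{2m} \approx \omega(\ell_n) V_1(a_n, b_n)$, saturating condition (3) at the distinguished pairs. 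Conditions (1) and (3) at pairs not of the form $(a_n, b_n)$ are verified by bookkeeping through the nested Cantor structure using the geometric spacing of the $\ell_n$.

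\textbf{Contradiction and main obstacle.} Comparing the constructed lower bound $|A(a_n, b_n)| \gtrsim \omega(\ell_n)\ell_n^{2m}$ against the extension-theoretic upper bound $C'\omega(\ell_n)^{2\alpha}\ell_n^{2m}$ forces $\omega(\ell_n)^{1-2\alpha} \lesssim 1$; since $\omega(\ell_n) \to 0$ and $1 - 2\alpha < 0$ for $\alpha > 1/2$, this fails for $n$ large, giving the contradiction. The cutoff $\alpha = 1/2$ is sharp, matching the $C^{m,\sqrt\omega}$ extension obtained by the techniques of \cite{PSZ19} as discussed after Theorem~\ref{iffomega}. The main obstacle is the calibration of $F^0, G^0, H^0$ so that condition (3) holds \emph{uniformly} over all pairs in $K$ (not just the distinguished bad ones) and so that $V_1(a_n, b_n)$ attains the target scaling $\ell_n^{2m}$; these depend on the precise form of $V_1$ from \cite{PSZ19} given in \eqref{Vhatab} and drive the concrete choice of the oscillating data on $K$.
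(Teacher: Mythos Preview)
Your core strategy is right and matches the paper's: take $F^k=G^k=0$ for $1\le k\le m$, arrange $|A(a_n,b_n)|\asymp \omega(\ell_n)\ell_n^{2m}$ along a sequence of gaps, and contradict the bound $|A|\lesssim \omega(\ell_n)^{2\alpha}\ell_n^{2m}$ forced on any $C^{m,\omega^\alpha}$ horizontal extension (since all derivatives of the first two components must vanish on $K$). The $\omega^{1-2\alpha}$ blow-up is exactly the paper's punchline.

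However, you are making the construction harder than necessary, and the ``main obstacle'' you flag is an artifact of that. Two points:

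\emph{First}, there is no reason to take $F^0,G^0$ nonzero. Once $F^k=G^k=0$ for $k\ge 1$, the Taylor polynomials $T_a^mF,\ T_a^mG$ are constants, so $(T_a^mF)'=(T_a^mG)'=0$ and hence $V_1(a,b)=(b-a)^{2m}$ automatically---no calibration needed. Likewise $A(a,b)$ reduces to $H^0(b)-H^0(a)$ plus cross-terms of size $O(\omega(\ell)^2\ell^{2m})$ coming from your nonzero $F^0,G^0$, which are lower order but still clutter the bookkeeping. The paper simply sets $F^0=G^0\equiv 0$; then $A(a,b)=H^0(b)-H^0(a)$ exactly, and the entire example is encoded in a single scalar function $H^0$.

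\emph{Second}, a Cantor set is overkill. The paper takes $K=\{1\}\cup\bigcup_{n\ge 0}[c_n,d_n]$ with $[c_n,d_n]=[1-2^{-n},\,1-\tfrac34 2^{-n}]$, sets $H^0\equiv 4^{-mn}\omega(2^{-(n+2)})$ on $[c_n,d_n]$ and $H^0(1)=0$, and $H^k=0$ for $k\ge1$. Because $K$ contains genuine intervals on which the curve is constant, the vanishing of all derivatives of any extension on $K$ is immediate (no iteration through perfectness required). Conditions (1) and (3) for \emph{all} pairs $a<b$ in $K$ then become a two-line estimate using monotonicity of $\omega$ and the explicit gap sizes; the ``bookkeeping through the nested Cantor structure'' you anticipate disappears. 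For non-extendability the paper just invokes the necessity direction (Proposition~\ref{nec}) with $\omega^\alpha$ in place of $\omega$: since $T_aF=T_aG=0$ one gets $V_{\omega^\alpha}(a,b)=\omega(b-a)^{2\alpha}(b-a)^{2m}$, and evaluating $|A(d_n,c_{n+1})|/V_{\omega^\alpha}(d_n,c_{n+1})$ gives a constant times $\omega(2^{-(n+2)})^{1-2\alpha}\to\infty$.

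In short: your plan would work, but you have introduced free parameters ($F^0,G^0$, the Cantor geometry) that buy you nothing and create the very verification burden you call the main obstacle. Setting $F=G\equiv 0$ and using fat intervals collapses the whole construction to a single piecewise-constant $H^0$.
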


The paper is organized as follows. In Section \ref{preliminaries} we recall the necessary background. In Section \ref{sectionnecessary} and Section \ref{sectionsufficient} we prove that the conditions for $C^{m,\omega}$ horizontal extension in Theorem \ref{iffomega} are necessary and sufficient respectively. Finally in Section \ref{sectionjustify} we prove Theorem \ref{motivation}.

\bigskip

\textbf{Acknowledgements:} This work was supported by a grant from the Simons Foundation (\#576219, G. Speight).

\section{Preliminaries}\label{preliminaries}

\subsection{The Heisenberg Group}

\begin{definition}\label{Heisenberg}
For each integer $n\geq 1$, the \emph{Heisenberg group} $\mathbb{H}^{n}$ is the Lie group represented in coordinates by $\mathbb{R}^{2n+1}$ with points denoted $(x,y,t)$ with $x,y\in \mathbb{R}^{n}$ and $t\in \mathbb{R}$. The group law is given by:
\[(x,y,t) (x',y',t')=\left(x+x',y+y',t+t'+2\sum_{i=1}^{n}(y_{i}x_{i}'-x_{i}y_{i}')\right).\]
\end{definition}

We equip $\mathbb{H}^{n}$ with left invariant vector fields
\begin{equation}\label{heisenbergvectors}
X_{i}=\partial_{x_{i}}+2y_{i}\partial_{t}, \quad Y_{i} = \partial_{y_{i}}-2x_{i}\partial_{t}, \quad 1\leq i\leq n, \quad T=\partial_{t}.
\end{equation}
Here $\partial_{x_{i}}, \partial_{y_{i}}$ and $\partial_{t}$ denote the coordinate vectors in $\mathbb{R}^{2n+1}$, which may be interpreted as operators on differentiable functions. If $[\cdot, \cdot]$ denotes the Lie bracket of vector fields, then $[X_{i}, Y_{i}]=-4T$. Thus $\mathbb{H}^{n}$ is a step-2 Carnot group with horizontal layer $\mathrm{Span}\{X_{i}, Y_{i} \colon 1\leq i\leq n\}$ and second layer $\mathrm{Span}\{T\}$.

\begin{definition}
A vector in $T_p \mathbb{R}^{2n+1}$ is \emph{horizontal} at $p \in \mathbb{R}^{2n+1}$ if it is a linear combination of the vectors $X_{i}(p), Y_{i}(p), 1\leq i\leq n$.

An absolutely continuous curve $\gamma$ in $\mathbb{R}^{2n+1}$ is \emph{horizontal} if, at almost every point $t$, the derivative $\gamma'(t)$ is horizontal at $\gamma(t)$.
\end{definition}

\begin{lemma}\label{lift}
An absolutely continuous curve $\gamma\colon [a,b]\to \mathbb{R}^{2n+1}$ is a horizontal curve in the Heisenberg group if and only if, for $t\in [a,b]$:
\[\gamma_{2n+1}(t)=\gamma_{2n+1}(a)+2\sum_{i=1}^{n}\int_{a}^{t} (\gamma_{i}'\gamma_{n+i}-\gamma_{n+i}'\gamma_{i}).\]
\end{lemma}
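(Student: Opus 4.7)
The plan is to reduce the horizontality condition to a pointwise ODE on the last coordinate and then integrate using the fundamental theorem of calculus for absolutely continuous functions.

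First I would unpack the definition of the horizontal subspace in coordinates. At a point $p = (x,y,t)\in\mathbb{R}^{2n+1}$, the horizontal vectors are by definition the linear combinations $\sum_{i=1}^n a_i X_i(p) + \sum_{i=1}^n b_i Y_i(p)$ for real coefficients $a_i, b_i$. Reading off the coordinate components from \eqref{heisenbergvectors}, such a combination has $\partial_{x_i}$-component $a_i$, $\partial_{y_i}$-component $b_i$, and $\partial_t$-component $2\sum_{i=1}^n(a_i y_i - b_i x_i)$. Consequently a vector $v \in T_p\mathbb{R}^{2n+1}$ with coordinate components $(v_1,\dots,v_{2n+1})$ is horizontal at $p$ if and only if
\[v_{2n+1} = 2\sum_{i=1}^n \bigl(v_i\, y_i - v_{n+i}\, x_i\bigr),\]
since the coefficients $a_i,b_i$ are forced to be $v_i$ and $v_{n+i}$.

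Next I would apply this criterion pointwise along $\gamma$. For almost every $t\in[a,b]$ the derivative $\gamma'(t)$ exists, and horizontality of $\gamma'(t)$ at $\gamma(t)$ becomes
\[\gamma_{2n+1}'(t) = 2\sum_{i=1}^n \bigl(\gamma_i'(t)\,\gamma_{n+i}(t) - \gamma_{n+i}'(t)\,\gamma_i(t)\bigr) \quad \text{for a.e. } t.\]
This is an equality of measurable functions that are integrable on $[a,b]$ (the right-hand side is a product of a bounded continuous function and an $L^1$ function, hence integrable).

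Finally I would invoke the fundamental theorem of calculus. Since $\gamma$ is absolutely continuous, so is the scalar component $\gamma_{2n+1}$, and therefore $\gamma_{2n+1}(t) - \gamma_{2n+1}(a) = \int_a^t \gamma_{2n+1}'(s)\,\mathrm{d}s$ for every $t\in[a,b]$. Combining this with the pointwise identity above gives the displayed integral formula, proving the ``only if'' direction. For the converse, if the integral identity holds for every $t$, then the integrand on the right is the a.e.\ derivative of the absolutely continuous function on the left, which recovers the pointwise horizontality condition at a.e.\ $t$, and hence $\gamma$ is horizontal.

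There is no serious obstacle here; the statement is essentially a translation between the infinitesimal description via the vector fields $X_i, Y_i$ and the integral description. The only point requiring some care is observing that absolute continuity of $\gamma$ transfers to the last coordinate so that the fundamental theorem of calculus applies and that the integrand is indeed integrable on $[a,b]$.
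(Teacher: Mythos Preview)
Your argument is correct and is the standard proof of this fact. The paper does not include a proof of this lemma, treating it as a well-known characterization of horizontal curves in $\mathbb{H}^n$; your derivation via the coordinate description of the horizontal subspace followed by the fundamental theorem of calculus for absolutely continuous functions is exactly what one would supply if a proof were required.
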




Lemma \ref{lift} implies that for any horizontal curve $\gamma$ we have
$$
\gamma_{2n+1}'(t) = 2\sum_{i=1}^{n} (\gamma_{i}'(t)\gamma_{n+i}(t)-\gamma_{n+i}'(t)\gamma_{i}(t)) 
\quad
\text{for a.e. } t \in [a,b].
$$ 
If we assume that $\gamma$ is $C^1$, this equality holds for every $t \in [a,b]$. If we further assume that $\gamma$ is $C^m$ for some $m \geq 1$, then, for $1 \leq k \leq m$, we may write 
\begin{equation}
\label{HigherHoriz}
D^k\gamma_{2n+1}(t) = \sum_{i=1}^{n} \mathcal{P}^k\left(\gamma_{i}(t),\gamma_{n+i}(t),\gamma_{i}'(t),\gamma_{n+i}'(t),\dots,D^k\gamma_{i}(t),D^k\gamma_{n+i}(t)\right)
\end{equation}
for all $t\in [a,b]$ where $\mathcal{P}^k$ is a polynomial determined by the Leibniz rule. If $n=1$ writing out $\mathcal{P}^k$ explicitly gives
\[\gamma_{3}^k = 2 \sum_{i=0}^{k-1}  \binom{k-1}{i}  \left(\gamma_{1}^{k-i}\gamma_{2}^i- \gamma_{2}^{k-i}\gamma_{1}^i \right).\]

Throughout this paper we work in the first Heisenberg group $\mathbb{H}^{1}$.

\subsection{The Euclidean Whitney Extension Theorem for $C^{m,\omega}$ mappings}

Throughout this paper we assume that $m\geq 1$ is an integer and $\omega$ is a modulus of continuity with the properties given in the following definition.

\begin{definition}\label{defmodulus}
A \emph{modulus of continuity} is a function $\omega\colon [0,\infty] \to [0,\infty]$ which is continuous, increasing, and concave with $\omega(0)=0$ and which is not identically 0.

Given a modulus of continuity $\omega$ and an interval $I\subset \mathbb{R}$, a map $\varphi\colon I\to \mathbb{R}$ is \emph{of class $C^{m,\omega}$} if $\varphi$ is $C^m$ and the following seminorm is finite:
\[ 
\|\varphi\|_{C^{m,\omega}(I)}:=\sup_{\substack{a,b\in I\\ a\neq b}}\frac{|D^m\varphi(b)- D^m\varphi(a)|}{\omega(|b-a|)}.\]
\end{definition}

In other words, the derivative $D^m \varphi$ is uniformly continuous on $I$ with modulus of continuity $\omega$.
The following lemma will be useful later.

\begin{lemma}\label{omegaovert}
The function $t\mapsto \omega(t)/t$ is decreasing on $(0,\infty)$.
\end{lemma}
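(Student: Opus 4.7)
The plan is to exploit the three defining properties of the modulus: $\omega(0)=0$, concavity, and nonnegativity. Given any $0<s<t$, I would write $s$ as a convex combination of $0$ and $t$, namely
\[
s = \tfrac{s}{t}\,t + \bigl(1-\tfrac{s}{t}\bigr)\cdot 0,
\]
and then apply the concavity inequality to $\omega$ at this convex combination.

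By concavity of $\omega$ on $[0,\infty)$,
\[
\omega(s) \;\geq\; \tfrac{s}{t}\,\omega(t) + \bigl(1-\tfrac{s}{t}\bigr)\,\omega(0) \;=\; \tfrac{s}{t}\,\omega(t),
\]
where the last equality uses $\omega(0)=0$. Dividing both sides by $s>0$ yields
\[
\frac{\omega(s)}{s} \;\geq\; \frac{\omega(t)}{t},
\]
which is exactly the claim that $t\mapsto \omega(t)/t$ is decreasing (in the weak sense) on $(0,\infty)$. No case analysis or further estimates are needed, and there is no serious obstacle: the only subtlety is to recognize that the point $0$ must be included in the convex combination so that the hypothesis $\omega(0)=0$ gets used. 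Monotonicity of $\omega$ itself, while given, is not needed for this argument.
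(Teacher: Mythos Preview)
Your proof is correct and essentially identical to the paper's own argument: both write the smaller point as a convex combination of $0$ and the larger point, apply concavity together with $\omega(0)=0$, and divide through to obtain the desired inequality.
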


\begin{proof}
Assume $x,y \in (0,\infty)$ with $x<y$. Since $\omega$ is concave, 
\[ \left(1-\frac{x}{y}\right)\omega(0)+\left(\frac{x}{y}\right)\omega(y)\leq \omega\left(\left(1-\frac{x}{y}\right)\cdot 0 +\left(\frac{x}{y}\right)\cdot y\right).\]
Hence $\frac{x}{y} \omega(y) \leq \omega(x)$ which gives $\frac{\omega(y)}{y}\leq \frac{\omega(x)}{x}$.
\end{proof}



\begin{definition}\label{taylor}
Suppose $K \subset \mathbb{R}$ and 
$F=(F^{k})_{k=0}^{m}$ is a collection of continuous, real-valued functions defined on $K$.
Given $a\in K$, 
the \emph{Taylor polynomial of order $m$ of $F$ at $a$} is defined by
$T_{a}^{m}F(x)=\sum_{k=0}^{m} \frac{F^{k}(a)}{k!}(x-a)^{k}$ for $x \in \mathbb{R}$. If $m$ or $a$ are clear from the context, we write $T_{a}F$ or even $TF$ for the Taylor polynomial.

When $f \in C^m(I)$ for an interval $I \subseteq \bbR$, the Taylor polynomial $T_a^mf$ is defined as usual using the collection $(D^k f)_{k=0}^m$.
\end{definition}


By Taylor's theorem, for all $f\in C^{m,\omega}(I)$ there is a constant $C>0$ so that
\begin{equation}\label{tayloroemgaest}
|D^{k}f(b)-T_{a}^{m-k}(D^{k}f)(b)| \leq C\omega(|b-a|) |b-a|^{m-k} \mbox { for }a,b\in I, 0\leq k\leq m.
\end{equation}
See, for example, (2) in \cite{Folland}.

\begin{definition}\label{whitneyfield}
Let $K\subset \bbR$ be compact and 
$F=(F^{k})_{k=0}^{m}$ be a collection of continuous, real-valued functions defined on $K$.

$F$ is a \emph{Whitney field of class $C^m$ on $K$} if
\[|F^{k}(b)-T_{a}^{m-k}F^{k}(b)|=o(|b-a|^{m-k})\]
for every $0\leq k\leq m$, uniformly as $|b-a|\to 0$ with $a,b \in K$.

$F$ is a \emph{Whitney field of class $C^{m,\omega}$ on $K$} if there is a constant $C>0$ so that
\[ |F^{k}(b)-T_{a}^{m-k}F^{k}(b)| \leq C\omega(|b-a|) |b-a|^{m-k}\]
for every $0\leq k\leq m$ and $a,b\in K$.
\end{definition}

The classical Whitney extension theorem can be stated as follows \cite{Whi34}.

\begin{theorem}[Classical Whitney extension theorem]\label{classicalWhitney}
Let $K$ be a closed subset of an open set $U\subset \mathbb{R}$. Then there is a continuous linear mapping $W$ from the space of Whitney fields of class $C^m$ on $K$ to $C^m(U)$ such that
\[D^{k}(WF)(x)=F^{k}(x) \quad \mbox{for $0\leq k\leq m$ and $x\in K$},\]
and $WF$ is $C^{\infty}$ on $U\setminus K$.
\end{theorem}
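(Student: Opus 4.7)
The plan is to follow the classical Whitney construction: decompose the open set $U\setminus K$ into intervals of controlled geometry, build a smooth partition of unity adapted to this decomposition, and glue Taylor polynomials of $F$ along it. Since we are in $\mathbb{R}$, the open set $U\setminus K$ is a countable union of open intervals, which I would further subdivide into a Whitney decomposition $\{Q_j\}$ with $\mathrm{diam}(Q_j)\simeq \mathrm{dist}(Q_j,K)$, with consecutive intervals of comparable length, and with mildly enlarged copies $Q_j^{*}$ having uniformly bounded overlap. A standard construction then yields a smooth partition of unity $\{\varphi_j\}$ subordinate to $\{Q_j^{*}\}$ satisfying $|D^{k}\varphi_j|\lesssim \mathrm{diam}(Q_j)^{-k}$ for $0\leq k\leq m$. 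For each $Q_j$, pick a point $p_j\in K$ realising $\mathrm{dist}(Q_j,K)=\mathrm{dist}(Q_j,p_j)$ and set $P_j:=T_{p_j}^{m}F$.

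Next I define the extension by
\[WF(x)=\begin{cases} F^{0}(x) & x\in K,\\ \sum_{j}\varphi_j(x)P_j(x) & x\in U\setminus K.\end{cases}\]
The map $F\mapsto WF$ is manifestly linear since each $P_j$ depends linearly on $F$ and the $\varphi_j$ are fixed once and for all; because the sum is locally finite on $U\setminus K$ with $C^{\infty}$ ingredients, $WF$ is automatically $C^{\infty}$ there. What remains is to verify the matching condition $D^{k}(WF)(x)=F^{k}(x)$ for $x\in K$ and $0\leq k\leq m$, together with $C^{m}$-regularity across $K$ and the operator continuity claim.

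The technical heart is an estimate of the form
\[|D^{k}(WF)(x)-T_{a}^{m-k}F^{k}(x)|=o(|x-a|^{m-k})\]
uniformly as $|x-a|\to 0$, for $a\in K$ and $x\in U$. Using $\sum_{j}\varphi_j\equiv 1$ on $U\setminus K$ and the Leibniz rule, one rewrites the left side as a sum of terms of the form $(D^{l}\varphi_j(x))\,D^{k-l}(P_j-T_{a}^{m}F)(x)$. The coefficients of $P_j-T_{a}^{m}F$ involve differences $F^{r}(p_j)-T_{a}^{m-r}F^{r}(p_j)$, which by the Whitney field hypothesis are $o(|p_j-a|^{m-r})$. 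Combined with $|D^{l}\varphi_j|\lesssim \mathrm{diam}(Q_j)^{-l}$ and the geometric observation that for the contributing $j$'s we have $|p_j-a|\lesssim |x-a|\simeq \mathrm{diam}(Q_j)$, the blow-up of $D^{l}\varphi_j$ is exactly cancelled by the decay of the polynomial differences, yielding the claimed $o(|x-a|^{m-k})$ bound.

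This estimate is the main obstacle: each individual Taylor-polynomial difference is only known through a qualitative Whitney-field bound, and one must match scales carefully to absorb the diverging derivatives of the partition of unity. Once it is in place, $D^{k}(WF)(a)=F^{k}(a)$ for $a\in K$ follows by letting $x\to a$; continuity of $D^{k}(WF)$ at points of $K$ is obtained by applying the same estimate to compare $x,y\in U$ through a common nearby $a\in K$; and continuity of $W$ as a linear map on the natural Whitney-field seminorms is an immediate consequence of the quantitative version of the same bookkeeping.
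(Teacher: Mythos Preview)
The paper does not actually prove this theorem: it is simply stated as the classical result with a citation to Whitney's original paper \cite{Whi34}, and immediately followed by the remark that the $C^{m,\omega}$ version (Theorem~\ref{classicalWhitneyomega}) ``follows immediately from Whitney's original proof.'' So there is no proof in the paper to compare against.

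That said, your sketch is exactly the classical Whitney construction and is correct in outline. The one point worth tightening is the continuity of the linear operator $W$: you say it is ``an immediate consequence of the quantitative version of the same bookkeeping,'' but to make this precise you need to specify the topology on the space of Whitney fields (the standard one given by the seminorms $\sup_{K}|F^{k}|$ together with the best constants in the Whitney-field remainder estimates), and check that the $o(\cdot)$ estimates you derive are uniform over bounded sets in this topology. This is routine but should be stated explicitly if you are writing a full proof rather than a sketch.
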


The following $C^{m,\omega}$ version of the classical Whitney extension theorem follows immediately from Whitney's original proof.

\begin{theorem}\label{classicalWhitneyomega}
Suppose $K\subset \mathbb{R}$ is closed and 
$F=(F^{k})_{k=0}^{m}$ is a collection of continuous, real-valued functions defined on $K$.
Then there is some $f\in C^{m,\omega}(\mathbb{R})$ satisfying $D^{k}f|_{K}=F^{k}$ for $0\leq k\leq m$ if and only if $F$ is a Whitney field of class $C^{m,\omega}$ on $K$.
\end{theorem}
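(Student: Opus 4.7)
The plan is to prove each direction separately. For necessity, a direct application of Taylor's theorem suffices: if $f \in C^{m,\omega}(\mathbb{R})$ extends the data so that $D^k f|_K = F^k$, then the estimate \eqref{tayloroemgaest}, applied to $f$ and restricted to $a, b \in K$, is exactly the required Whitney-field bound of class $C^{m,\omega}$.

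For sufficiency I would run Whitney's classical construction. Decompose $U := \mathbb{R}\setminus K$ into a family of dyadic intervals $\{Q_j\}$ each of length comparable to $\mathrm{dist}(Q_j, K)$, and build a smooth partition of unity $\{\varphi_j\}$ subordinate to a slight dilation of this cover with the standard derivative bounds $|\varphi_j^{(k)}(x)| \leq C\, \ell(Q_j)^{-k}$. Choose $a_j \in K$ nearest to $Q_j$ and define
\[ f(x) = F^0(x) \mbox{ for } x \in K, \qquad f(x) = \sum_j \varphi_j(x)\, T_{a_j}^m F(x) \mbox{ for } x \in U. \]
Since $F$ is in particular a Whitney field of class $C^m$, Theorem~\ref{classicalWhitney} tells us that this $f$ is $C^m$ on $\mathbb{R}$, is $C^\infty$ on $U$, and satisfies $D^k f|_K = F^k$. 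The remaining task is to upgrade the conclusion to $f \in C^{m,\omega}(\mathbb{R})$, i.e.\ to verify that $D^m f$ has modulus of continuity $\omega$ globally on $\mathbb{R}$.

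The verification splits into three cases. If $a, b \in K$, the Whitney-field bound at order $k=m$ immediately gives $|F^m(b) - F^m(a)| \leq C\omega(|b-a|)$ since $T_a^0 F^m = F^m(a)$. If $a \in K$ and $b \in U$, locate $b$ in some $Q_j$; the choice of $a_j$ forces $|a_j - a| \lesssim |b-a|$ and $\ell(Q_j) \lesssim |b-a|$, so expanding $D^m f(b)$ via the Leibniz rule and bounding each term using the $C^{m,\omega}$ Whitney-field estimates at $a_j$, together with the monotonicity of $\omega(t)/t$ from Lemma~\ref{omegaovert}, yields the required bound.

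The main technical obstacle is the case $a, b \in U$. Here one exploits the fact that $\sum_j \varphi_j \equiv 1$ on $U$ to subtract a common Taylor polynomial (e.g.\ $T_{a_{j_0}}^m F$ for some reference index $j_0$) inside the sum, producing the cancellation needed to balance the $\ell(Q_j)^{-k}$ blow-up against the Whitney-field error terms. The resulting calculation is structurally identical to Whitney's original argument, with each occurrence of an $o(|b-a|^{m-k})$ error uniformly replaced by $C\omega(|b-a|)\,|b-a|^{m-k}$; Lemma~\ref{omegaovert} is precisely what ensures that $\omega$ evaluated at comparable but unequal scales can be interchanged with a multiplicative constant, so that all the substitutions go through and one obtains $|D^m f(b) - D^m f(a)| \leq C\omega(|b-a|)$ throughout $\mathbb{R}$.
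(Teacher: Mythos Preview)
Your proposal is correct and matches the paper's approach: the paper does not give a detailed proof but simply remarks that the $C^{m,\omega}$ version ``follows immediately from Whitney's original proof,'' and your sketch is precisely that---running Whitney's classical construction and observing that every $o(|b-a|^{m-k})$ term can be uniformly replaced by $C\omega(|b-a|)|b-a|^{m-k}$, with Lemma~\ref{omegaovert} handling scale comparisons.
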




\subsection{Area Discrepency and Velocity in $\mathbb{H}^{1}$}

Fix $\omega\colon [0,\infty] \to [0,\infty]$ a modulus of continuity. Let 
$F=(F^{k})_{k=0}^{m}$ and
$G=(G^{k})_{k=0}^{m}$
be collections of continuous, real-valued functions on $K$ and suppose $H:K \to \mathbb{R}$ is continuous.
Given $a,b\in K$ with $a<b$, define the \emph{area discrepancy} associated with $(F,G,H)$ as
\begin{align}\label{Aab}
 A(a,b)&:=H(b)-H(a)-2\int_{a}^{b}((T_{a}^mF)'(T_{a}^mG)-(T_{a}^mG)'(T_{a}^mF))\\
\nonumber &\qquad +2F(a)(G(b)-T_{a}^mG(b))-2G(a)(F(b)-T_{a}^mF(b)).
\end{align}
We define the associated \emph{velocity} as
\begin{equation}\label{Vab}
V_\omega(a,b):= (\omega(b-a))^{2}(b-a)^{2m} + \omega(b-a) (b-a)^{m}\int_{a}^{b}(|(T_{a}^{m}F)'|+|(T_{a}^{m}G)'|).
\end{equation}

Here we use the identifications $F(x) = F^0(x)$ and $G(x) = G^0(x)$.
Compare these definitions to those in \cite{PSZ19}.
Note $V_\omega (a,b)$ depends on the modulus of continuity $\omega$. Since $\omega$ is mostly fixed, we will typically denote $V_\omega$ simply as $V$. 

\begin{remark}
\label{LeftInvar}
Let $F=(F^{k})_{k=0}^{m}$ and
$G=(G^{k})_{k=0}^{m}$ 
be 
collections of continuous, real-valued functions on
a compact set $K\subset \mathbb{R}$
and let $H:K \to \mathbb{R}$ be continuous.
Fix a point $a\in K$. Define the collections $\hat{F} = (\hat{F}^k)_{k=0}^m$  and
$\hat{G} = (\hat{G}^k)_{k=0}^m$
on $K$
and the function 
$\hat{H}$
by
\begin{itemize}
\item $(\hat{F},\hat{G},\hat{H}) = (F(a),G(a),H(a))^{-1}(F,G,H)$,
\item $\hat{F}^k = F^k$ for $1 \leq k \leq m$,
\item $\hat{G}^k = G^k$ for $1 \leq k \leq m$.
\end{itemize}
Then $\hat{F}(a) = \hat{G}(a) = \hat{H}(a) = 0$, and 
$$
\hat{A}(a,b) = A(a,b) \quad \text{and} \quad \hat{V}(a,b) = V(a,b),
$$
where $\hat{A}(a,b)$ and $\hat{V}(a,b)$ respectively denote the area discrepancy \eqref{Aab} and the velocity \eqref{Vab} 
associated with $(\hat{F},\hat{G},\hat{H})$. The proof of this is a simple direct calculation, for instance as in \cite{PSZ19}
or Lemma~3.5 in \cite{Zim21}.
\end{remark}

\subsection{Inequalities for Polynomials}

%

The following consequences of the Markov inequality \cite{Mar89, Sha04} were proved in \cite{PSZ19}.

\begin{lemma}\label{Pbig}
Let $P$ be a polynomial of degree $m\geq 1$ and fix points $a<b$. Let $M=\max_{[a,b]}|P|$. Then there exists a closed subinterval $I\subset [a,b]$ of length at least $(b-a)/4m^2$ such that $|P(x)|\geq M/2$ for all $x\in I$.
\end{lemma}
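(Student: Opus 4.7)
The plan is to derive the lemma as a direct application of the classical Markov inequality, which states that for any polynomial $P$ of degree at most $m$,
\[ \max_{[a,b]} |P'| \le \frac{2m^2}{b-a} \max_{[a,b]} |P| = \frac{2m^2 M}{b-a}. \]

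I would first choose a point $x_0 \in [a,b]$ at which $|P|$ attains its maximum, so that $|P(x_0)| = M$. Combining the Markov bound above with the mean value theorem, for any $x \in [a,b]$ one has
\[ |P(x) - P(x_0)| \le |x - x_0| \cdot \max_{[a,b]} |P'| \le \frac{2m^2 M}{b-a}\,|x - x_0|. \]
Setting $r := (b-a)/(4m^2)$, this gives $|P(x) - P(x_0)| \le M/2$ whenever $|x - x_0| \le r$, and hence $|P(x)| \ge M/2$ by the reverse triangle inequality.

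It remains to produce the subinterval $I$ of the required length. I would take $I := [x_0 - r,\, x_0 + r] \cap [a,b]$. Since $x_0 \in [a,b]$ and $r \le b-a$, at least one of the two half-intervals $[x_0 - r, x_0]$ or $[x_0, x_0 + r]$ lies entirely in $[a,b]$, so $|I| \ge r = (b-a)/(4m^2)$, as desired. The only nontrivial ingredient is the Markov inequality itself; the remaining argument is an elementary application of the mean value theorem, so there is no real obstacle to the proof beyond invoking this classical bound.
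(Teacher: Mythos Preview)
Your proof is correct and follows exactly the approach the paper indicates: the paper does not prove the lemma directly but cites it as a consequence of the Markov inequality proved in \cite{PSZ19}, and your argument via Markov plus the mean value theorem is precisely that derivation. The only minor check worth noting is that $2r \le b-a$ (since $4m^2 \ge 2$), which is what guarantees at least one of the half-intervals around $x_0$ stays inside $[a,b]$; you implicitly use this and it holds.
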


\begin{corollary}\label{intmax}
Let $P$ be a polynomial of degree $m\geq 1$ and fix points $a<b$. Let $M=\max_{[a,b]}|P|$. Then
\[ \frac{M(b-a)}{8m^2}\leq \int_{a}^{b} |P|\leq M(b-a).\]
\end{corollary}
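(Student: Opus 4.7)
The plan is to derive Corollary~\ref{intmax} directly from Lemma~\ref{Pbig}; no polynomial theory beyond that lemma is needed. Both inequalities are straightforward bounds on $\int_a^b |P|$.

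For the upper bound, I would observe that $|P(x)| \leq M$ for every $x \in [a,b]$ by definition of $M$, so integrating pointwise gives
\[ \int_a^b |P| \leq \int_a^b M \, \mathrm{d}x = M(b-a), \]
which is the claimed right-hand inequality.

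For the lower bound, I would invoke Lemma~\ref{Pbig} to produce a closed subinterval $I \subset [a,b]$ with $|I| \geq (b-a)/(4m^2)$ on which $|P(x)| \geq M/2$. Then
\[ \int_a^b |P| \geq \int_I |P| \geq \frac{M}{2} \cdot |I| \geq \frac{M}{2} \cdot \frac{b-a}{4m^2} = \frac{M(b-a)}{8m^2}, \]
which establishes the left-hand inequality.

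There is no real obstacle here: the work is entirely carried out in Lemma~\ref{Pbig} (which itself is the nontrivial consequence of the Markov inequality), and the corollary merely records its integral form. The only thing one should be careful about is the edge case where $P$ is the zero polynomial, in which case $M = 0$ and both inequalities reduce to $0 \leq 0 \leq 0$ trivially.
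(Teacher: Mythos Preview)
Your proof is correct and is exactly the intended derivation: the paper does not spell out a proof here but merely cites \cite{PSZ19}, and the argument there is precisely the one you give---the upper bound is trivial, and the lower bound follows by integrating over the subinterval supplied by Lemma~\ref{Pbig}. There is nothing to add.
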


\section{Necessary Conditions For a $C^{m,\omega}$ Horizontal Extension} \label{sectionnecessary}

We use this section to prove Proposition \ref{nec}, beginning with the following.

\begin{lemma}\label{claimtaylorest}
Suppose $I \subset \bbR$ is a compact interval and $f \in C^{m,\omega}(I)$.
Then there is a constant $C \geq 1$ depending on $I$, $\omega$, $m$, and $f$ such that, for all $a,x \in I$,
\begin{enumerate}[(i)]
\item $|D^if(x)-D^if(a)|\leq C\omega(|x-a|)$ for $0\leq i\leq m$.
\item $|f(x)-T_a^mf(x)|\leq C\omega(|x-a|) |x-a|^m$.
\item $|D^kf(x)- D^kT_a^mf(x)|\leq  C\omega(|x-a|) |x-a|^{m-k}$ for $0\leq k\leq m$.
\end{enumerate}
\end{lemma}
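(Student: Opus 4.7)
The plan is to recognize that statement (iii) is, once we unwind notation, essentially the Taylor estimate \eqref{tayloroemgaest}, that (ii) is its $k=0$ specialization, and that only (i) needs a short further argument using Lemma~\ref{omegaovert}.

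First, I would record the identity $D^k T_a^m f(x) = T_a^{m-k}(D^k f)(x)$, obtained by termwise differentiating the polynomial $T_a^m f(x) = \sum_{j=0}^m \frac{D^j f(a)}{j!}(x-a)^j$ exactly $k$ times and reindexing. Given this identity, statement (iii) is an immediate restatement of \eqref{tayloroemgaest}, with constant a multiple of $\|f\|_{C^{m,\omega}(I)}$. Statement (ii) is then the special case $k=0$ of (iii).

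For (i), the case $i=m$ is just Definition~\ref{defmodulus}, which yields $|D^m f(x) - D^m f(a)| \leq \|f\|_{C^{m,\omega}(I)}\, \omega(|x-a|)$. For $0 \leq i < m$, since $f \in C^m(I)$ and $I$ is compact, I would apply the mean value theorem to obtain
\[ |D^i f(x) - D^i f(a)| \leq \|D^{i+1} f\|_{L^\infty(I)}\, |x-a|. \]
Then I would invoke Lemma~\ref{omegaovert}, which asserts that $\omega(t)/t$ is decreasing on $(0,\infty)$. Applied with $x = |x-a|$ and $y$ equal to the length of $I$, this gives $|x-a| \leq (\mathrm{length}(I)/\omega(\mathrm{length}(I)))\, \omega(|x-a|)$. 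Combining these two inequalities completes (i) with a constant depending on $I$, $\omega$, $m$, and $f$ as claimed.

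There is no significant obstacle in this lemma; the only structural point worth flagging is the use of Lemma~\ref{omegaovert} to trade a polynomial factor $|x-a|^j$ with $j \geq 1$ for the typically slower $\omega(|x-a|)$, a trick that will presumably be reused throughout the rest of the paper.
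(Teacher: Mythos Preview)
Your proof is correct and follows essentially the same approach as the paper: parts (ii) and (iii) are reduced to \eqref{tayloroemgaest}, and part (i) is handled by combining a first-order bound with Lemma~\ref{omegaovert}. The only cosmetic difference is that for $i<m$ you use the mean value theorem to get a single term $\|D^{i+1}f\|_\infty |x-a|$, whereas the paper expands via Taylor's theorem to a sum $\sum_{s=i+1}^m \|D^s f\|_\infty |x-a|^{s-i}$ before applying the same lemma.
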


\begin{proof}
We prove (i). The case $i=m$ follows from $f \in C^{m,\omega}(I)$. For $0\leq i<m$ by Taylor's theorem, for all $a,x \in I$, there is $\zeta$ between $a$ and $x$ so that
\[D^{i}f(x)=D^{i}f(a)+D^{i+1}f(a)(x-a)+\cdots + \frac{D^{m-1}f(a)}{(m-1-i)!}(x-a)^{m-1-i}+\frac{D^{m}f(\zeta)}{(m-i)!}(x-a)^{m-i}.\]
Since each $D^{s}f$ is continuous and hence bounded on $I$,
\begin{equation}\label{taylorjan19}
|D^{i}f(x)-D^{i}f(a)|\leq \sum_{s=i+1}^{m}\|D^{s}f\|_{\infty}|x-a|^{s}.
\end{equation}
Since $w(t)/t$ is decreasing by Lemma~\ref{omegaovert}, $\omega(|x-a|)/|x-a|\geq \omega( \ell(I))/\ell(I)$ so $|x-a|\leq C\omega(|x-a|)$ for a constant $C$ depending on $I$ and $\omega$. Enlarging $C$ as needed and using \eqref{taylorjan19} gives statement (i). 

Estimates (ii) and (iii) follow from \eqref{tayloroemgaest}.
%
\end{proof}

Note that in Lemma \ref{claimtaylorest} the constant $C$ can be chosen to depend only on some larger compact interval containing $I$. This is clear from the proof.

\begin{proposition}\label{nec}
Let $(f,g,h)\colon \mathbb{R}\to \mathbb{H}^{1}$ be a $C^{m,\omega}$ horizontal curve and $K\subset \mathbb{R}$ be compact.
Define $F=(D^k f|_K)_{k=0}^m$, $G=(D^k g|_K)_{k=0}^m$, $H=(D^k h|_K)_{k=0}^m$. 
Then 
\begin{enumerate}
\item $F$, $G$, $H$ are Whitney fields of class $C^{m,\omega}$ on $K$, 
\item for every $1 \leq k \leq m$, the following holds on $K$:
\begin{equation}
\label{Horiznec}
        H^k = 2 \sum_{i=0}^{k-1}  \binom{k-1}{i}  \left(F^{k-i}G^i- G^{k-i}F^i \right),
\end{equation}
\item there is a constant $C\geq 1$ so $|A(a,b) |\leq C V_{\omega}(a,b)$ for all $a,b\in K$, $a<b$.
\end{enumerate}
\end{proposition}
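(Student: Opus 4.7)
The plan is to verify the three conditions in turn. Conditions (1) and (2) are essentially immediate from results already stated; the real work is in (3). For (1), I apply Lemma~\ref{claimtaylorest}(iii) to each coordinate $f$, $g$, $h$ on a compact interval $I \supset K$. Since $D^\ell T_a^m f = T_a^{m-\ell}(D^\ell f)$, this produces exactly the $C^{m,\omega}$ Whitney field estimate for $F = (D^k f|_K)_{k=0}^m$, and similarly for $G$ and $H$. For (2), since $(f,g,h) \in C^m$, the pointwise horizontality relation $h' = 2(f'g - g'f)$ may be differentiated $k-1$ times using the Leibniz rule to give the identity \eqref{HigherHoriz} specialized to $n = 1$; restricting to $K$ yields (2).

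For (3), fix $a < b$ in $K$. The first move is to apply Remark~\ref{LeftInvar} and reduce to $F(a) = G(a) = H(a) = 0$, so that $P := T_a^m F$ and $Q := T_a^m G$ both vanish at $a$ and horizontality (Lemma~\ref{lift}) gives $h(b) = 2\int_a^b (f'g - g'f)$. The boundary correction terms in the definition of $A$ disappear and we obtain
\[ A(a,b) = 2 \int_a^b \bigl( f'g - g'f - P'Q + Q'P \bigr). \]

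Writing $f = P + R_f$ and $g = Q + R_g$ and expanding, the integrand becomes
\[ P' R_g - Q' R_f + R_f' Q - R_g' P + R_f' R_g - R_g' R_f. \]
Lemma~\ref{claimtaylorest} supplies the remainder estimates $|R_f|, |R_g| \leq C \omega(b-a)(b-a)^m$ and $|R_f'|, |R_g'| \leq C\omega(b-a)(b-a)^{m-1}$ on $[a,b]$. For $\int_a^b P' R_g$ and $\int_a^b Q' R_f$, pulling out the sup-norm bound on the remainder directly yields $C\omega(b-a)(b-a)^m \int_a^b (|P'| + |Q'|)$. For $\int_a^b R_g' P$ and $\int_a^b R_f' Q$, the crucial observation is that $P(a) = Q(a) = 0$ forces $|P(x)| \leq \int_a^b |P'|$ and $|Q(x)| \leq \int_a^b |Q'|$, trading sup-norm control of $P$ and $Q$ for integrals of their derivatives and producing the same bound up to an extra factor of $(b-a)$ absorbed into $(b-a)^m$. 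The pure remainder products $\int_a^b R_f' R_g$ and $\int_a^b R_g' R_f$ contribute $C\omega(b-a)^2 (b-a)^{2m}$. Summing recovers $|A(a,b)| \leq C V_\omega(a,b)$.

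The main obstacle is the bookkeeping in this final step. Without the normalization from Remark~\ref{LeftInvar}, the $\int R_g' P$ and $\int R_f' Q$ terms would only be controlled by $\|P\|_{L^\infty[a,b]}$ and $\|Q\|_{L^\infty[a,b]}$, which are not dominated by $V_\omega(a,b)$ as defined; it is precisely the vanishing $P(a) = Q(a) = 0$, made available by the left-translation reduction, that matches the six cross terms to the two pieces of the velocity. Getting the exponents of $\omega(b-a)$ and $(b-a)$ to line up correctly in each term is where the care is required.
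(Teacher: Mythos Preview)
Your proof is correct and follows essentially the same route as the paper: reduce via left translation to $F(a)=G(a)=H(a)=0$, use horizontality to rewrite $A(a,b)$ as an integral, decompose $f'g-P'Q$ and $fg'-PQ'$ into the same six cross terms, and match each to a piece of $V_\omega$. The one genuine difference is in handling $\int_a^b |R_f'|\,|Q|$: the paper bounds $\int_a^b |Q|$ via Corollary~\ref{intmax} (a consequence of the Markov inequality for polynomials), whereas you use the more elementary estimate $|Q(x)|\le \int_a^b |Q'|$ coming directly from $Q(a)=0$ and the fundamental theorem of calculus---this bypasses the polynomial inequality entirely and is a small simplification.
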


\begin{proof}
Suppose $f, g, h, F, G, H, K$ are as in the statement of Proposition \ref{nec}. 
Without loss of generality, we may assume that $K=[A,B]$ is a closed interval.
Indeed, if (1), (2), and (3) hold on the interval $[A,B]$, then they also hold on any compact subset.
By Theorem \ref{classicalWhitneyomega}, $F$, $G$, $H$ must be Whitney fields of class $C^{m,\omega}$ on $K$.
The lifting equation \eqref{HigherHoriz} gives
\[
 D^kh = 2 \sum_{i=0}^{k-1}  \binom{k-1}{i}  \left(D^{k-i}f D^ig- D^{k-i}g D^if \right),
\]
on $\mathbb{R}$ for $1 \leq k \leq m$. This proves Proposition \ref{nec} (1) and (2).

It remains to prove (3).
Fix $a, b \in K$, $a<b$. To simplify notation, let $Tf=T_a^mf$ and $Tg=T_a^mg$ be the Taylor polynomials of $f$ and $g$ of order $m$ at $a$. We first prove $|A(a,b)|\leq CV(a,b)$ under the assumption $f(a)=g(a)=h(a)=0$. In this case $A(a,b)$ takes the form
\[ A(a,b) = h(b)-h(a)-2 \int_{a}^{b} ((Tf)'(Tg)-(Tf)(Tg)'). \]
Since $(f,g,h)$ is a horizontal curve, we have $h(b)-h(a)=2\int_{a}^{b} (f'g-fg')$.
Hence we can estimate $|A(a,b)|$ as follows
\begin{align}\label{kareaest}
&\left| h(b)-h(a)-2\int_{a}^{b} ((Tf)'Tg-Tf(Tg)') \right|\\
&\qquad \leq 2\int_{a}^{b}|f'g-(Tf)'Tg| + 2\int_{a}^{b}|fg'-Tf(Tg)'|.\nonumber
\end{align}
Writing $\omega$ to denote $\omega(b-a)$,
we estimate the first term as follows using Lemma~\ref{claimtaylorest}:
\begin{align*}
\int_{a}^{b}|f'g-(Tf)'Tg|&\leq \int_{a}^{b}|f'-(Tf)'||g-Tg| + |f'-(Tf)'||Tg| + |g-Tg||(Tf)'|\\
&\leq C^2 \omega^{2}\cdot(b-a)^{2m} + C\omega\cdot(b-a)^{m-1}\int_{a}^{b}|Tg|\\
&\qquad + C\omega\cdot(b-a)^{m}\int_{a}^{b}|(Tf)'|
\end{align*}
for some constant $C \geq 1$ depending only on $K$, $\omega$, $f$, and $g$.
Using a similar estimate for the second term gives the following estimate of \eqref{kareaest},
\begin{align*}
& \left| h(b)-h(a)-2 \int_{a}^{b} ((Tf)'Tg-Tf(Tg)') \right|\\
&\qquad \leq 4C \omega^{2} \cdot (b-a)^{2m} + 2C\omega\cdot (b-a)^{m-1}\int_{a}^{b}(|Tf|+|Tg|)\\
&\qquad \qquad + 2C\omega \cdot (b-a)^{m}\int_{a}^{b}(|(Tf)'|+|(Tg)'|).
\end{align*}
Since $Tf(a)=f(a)=0$, we have $|Tf(x)|\leq M(b-a)$ on $[a,b]$ where we denote $M=\max_{[a,b]}|(Tf)'|$. Applying Corollary \ref{intmax} to the polynomial $(Tf)'$ gives
\[\int_{a}^{b}|Tf|\leq M(b-a)^{2}\leq 8m^{2}(b-a)\int_{a}^{b}|(Tf)'|.\]
A similar argument holds for $g$. Hence, enlarging $C$,
\begin{align*}
& \left| h(b)-h(a)-2 \int_{a}^{b} ((Tf)'Tg-Tf(Tg)') \right|\\
&\qquad \leq C\omega^{2}\cdot (b-a)^{2m} + C\omega\cdot (b-a)^{m}\int_{a}^{b}(|(Tf)'|+|(Tg)'|).
\end{align*}
This shows $|A(a,b)|\leq CV(a,b)$. 

To conclude, we now undo the assumption $f(a)=g(a)=h(a)=0$.

\begin{claim}\label{caseatorigin}
To prove $|A(a,b)|\leq CV(a,b)$ for some constant $C\geq 1$, it suffices to prove it under the assumption $f(a)=g(a)=h(a)=0$.
\end{claim}

\begin{proof}
If $f(a)$, $g(a)$, and $h(a)$ are arbitrary, define the $C^m$ horizontal curve
$$
(\hat{f},\hat{g},\hat{h}) = (f(a),g(a),h(a))^{-1}(f,g,h)
$$
on $\mathbb{R}$ and 
set 
$\hat{F}=(D^m \hat{f}|_K)_{k=0}^m$, $\hat{G}=(D^m \hat{g}|_K)_{k=0}^m$, and $\hat{H}=(D^m \hat{h}|_K)_{k=0}^m$.
Notice we have $\hat{f}(a)=\hat{g}(a)=\hat{h}(a)=0$ and the estimates listed in Lemma~\ref{claimtaylorest} remain true for the same constant with $(f,g,h)$ replaced by $(\hat{f},\hat{g},\hat{h})$. Hence $|\hat{A}(a,b)|\leq C\hat{V}(a,b)$ with $C$ depending on the constant chosen in Lemma~\ref{claimtaylorest} for $(f,g,h)$ and independent of $a,b$. By Remark~\ref{LeftInvar}, we have $A(a,b)=\hat{A}(a,b)$ and $V(a,b)=\hat{V}(a,b)$. Hence $|A(a,b)|\leq CV(a,b)$ and the claim holds.
\end{proof}

This concludes the proof of Proposition \ref{nec}.
\end{proof}

\section{Sufficiency of the Conditions for a $C^{m,\omega}$ Horizontal Extension}\label{sectionsufficient}

\begin{theorem}\label{newthm}
Let $F=(F^{k})_{k=0}^{m}$, $G=(G^{k})_{k=0}^{m}$, $H=(H^{k})_{k=0}^{m}$ be collections of continuous, real-valued functions on compact $K\subset \bbR$. Assume
\begin{enumerate}
\item $F$, $G$, $H$ are Whitney fields of class $C^{m,\omega}$ on $K$,
\item for every $1 \leq k \leq m$ the following holds on $K$:
\begin{equation}\label{Hpoly}
        H^k = 2 \sum_{i=0}^{k-1}  \binom{k-1}{i}  \left(F^{k-i}G^i- G^{k-i}F^i \right),
\end{equation}
\item there is a constant $C\geq 1$ so that $|A(a,b)|\leq C V_{\omega}(a,b)$ for all $a,b\in K$ with $a<b$.
\end{enumerate}
Then there is a horizontal curve $\Gamma \in C^{m,\omega}(\bbR,\bbR^3)$
such that $D^k\Gamma|_K = (F^k,G^k,H^k)$.
\end{theorem}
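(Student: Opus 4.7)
The plan is to adapt the strategy of \cite{PSZ19} from the $C^m$ setting to the $C^{m,\omega}$ setting, tightening the perturbation estimates to match the refined velocity in assumption (3).

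First, apply Theorem \ref{classicalWhitneyomega} to $F$ and $G$ to obtain $\bar f, \bar g \in C^{m,\omega}(\R)$ with $D^k \bar f|_K = F^k$ and $D^k \bar g|_K = G^k$ for $0 \leq k \leq m$. Fix $a_0 \in K$ and define
\[
\bar h(x) = H^0(a_0) + 2\int_{a_0}^x(\bar f'\, \bar g - \bar g'\, \bar f),
\]
so that $(\bar f,\bar g,\bar h)$ is a horizontal $C^{m,\omega}$ curve. Combining \eqref{HigherHoriz} with hypothesis (2) forces $D^k\bar h|_K = H^k$ for $1 \leq k \leq m$. The only defect is that $\bar h|_K$ may fail to equal $H^0$; a direct computation with \eqref{Aab} (reducing to $F(a)=G(a)=H(a)=0$ via Remark \ref{LeftInvar}) shows that for adjacent points $a<b$ of $K$ the discrepancy $\bar h(b)-H^0(b)$ coincides up to sign with $A(a,b)$.

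Next, enumerate the bounded components of $\R\setminus K$ as $(a_j,b_j)$ and construct perturbations $\varphi_j,\psi_j$ of class $C^{m,\omega}$ supported in $[a_j,b_j]$ that vanish to order $m$ at the endpoints. They are chosen so that $f:=\bar f+\sum_j \varphi_j$ and $g:=\bar g+\sum_j\psi_j$ have a horizontal lift $h$ matching $H^0$ on $K$. Because each $\varphi_j, \psi_j$ vanishes to order $m$ on $K$, the data $F$, $G$ is preserved and, by \eqref{HigherHoriz}, so are the higher-order values of $H$ on $K$. For the two unbounded components of $\R\setminus K$ no correction is needed: one extends $\bar f, \bar g$ by the appropriate Taylor polynomial and defines $h$ by the horizontal integral.

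The construction of $(\varphi_j,\psi_j)$ proceeds by splitting into two regimes, according to which summand of $V_\omega(a_j,b_j)$ dominates. When $I_j := \int_{a_j}^{b_j}(|(T_{a_j}^m\bar f)'|+|(T_{a_j}^m\bar g)'|)$ dominates, take $\psi_j=0$ and let $\varphi_j$ be a scaled $C^{m,\omega}$ bump designed so that the cross term $2\int(\varphi_j'\,\bar g - \bar g'\,\varphi_j)$ equals the required area; Lemma \ref{Pbig} and Corollary \ref{intmax} provide a sharp lower bound on the base area rate in terms of $\max_{[a_j,b_j]}|(T_{a_j}^m\bar f)'|$ or $\max_{[a_j,b_j]}|(T_{a_j}^m\bar g)'|$. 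When instead the term $\omega(b_j-a_j)^2(b_j-a_j)^{2m}$ dominates, use a paired bump for which $\varphi_j'\psi_j - \psi_j'\varphi_j$ has definite sign and scale. In either regime, assumption (3) forces $\|D^m\varphi_j\|_\infty,\ \|D^m\psi_j\|_\infty \lesssim \omega(b_j-a_j)$ with constants depending only on $m$, $\omega$, and the constant in (3).

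The hard part is this third step: engineering perturbations whose area output matches $A(a_j,b_j)$ exactly yet whose $C^{m,\omega}$ seminorm is bounded uniformly by $\omega(b_j-a_j)$. The extra $\omega(b-a)$ factor appearing in each summand of $V_\omega$, compared with the velocity used in the $C^m$ result of \cite{PSZ19}, is precisely what makes this work; without it one loses a factor of $\sqrt{\omega}$ as noted in \cite{Zim21} and exploited in Theorem \ref{motivation}. Once the amplitude bound is in hand, global $C^{m,\omega}$ regularity of $f$ and $g$ follows from the subadditivity of $\omega$ (a consequence of concavity together with $\omega(0)=0$), which controls the modulus of continuity of $D^m f$ and $D^m g$ across the gap boundaries and at accumulation points of $K$.
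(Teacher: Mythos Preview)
Your proposal is correct and follows essentially the same strategy as the paper: extend $F,G$ via Theorem~\ref{classicalWhitneyomega}, construct compactly supported $C^\infty$ perturbations on each gap $(a_j,b_j)$ by a two-regime case split (one bump against the dominant Taylor derivative when the integral term of $V_\omega$ dominates, a paired bump producing signed area when the $\omega^2(b-a)(b-a)^{2m}$ term dominates), then lift horizontally and verify global $C^{m,\omega}$ regularity. One small correction: the gap discrepancy you must cancel is not literally $A(a_j,b_j)$ but the quantity $\mathcal{A}=H(b_j)-H(a_j)-2\int_{a_j}^{b_j}(\bar f'\bar g-\bar g'\bar f)$, and the paper spends an extra estimate (mirroring the proof of Proposition~\ref{nec}) to pass from $|A|\le CV_\omega$ to $|\mathcal{A}|\le CV_\omega$.
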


We use this section to prove Theorem \ref{newthm}. Suppose $K,F,G,H$ satisfy the assumptions in the statement of the theorem. Fix $C\geq 1$ such that
\begin{equation}\label{quotientcvg}
|A(a,b)|\leq C V(a,b) \qquad \mbox{for all }a,b\in K \mbox{ with }a<b.
\end{equation}
Let $I=[\min K, \max K]$. It suffices to find a $C^{m,\omega}$ horizontal map $(f,g,h)\colon I\to \bbH$ which extends $(F,G,H)$. Here, derivatives and continuity at the endpoints are defined using one-sided limits. Write $I\setminus K=\cup_{i=1}^{\infty}(a_i,b_i)$ for disjoint open intervals $(a_i,b_i)$ with $a_i, b_i \in K$.

Using Theorem \ref{classicalWhitneyomega}, we can choose $f,g\colon \bbR\to \bbR$ of class $C^{m,\omega}$ such that
\[D^kf(x)=F^{k}(x) \mbox{ and }D^kg(x)=G^{k}(x) \mbox{ for every }x\in K \mbox{ and }0\leq k\leq m.\]
Recall that $D^{k}(T_{a}^{m}F)(x)=\sum_{\ell=0}^{m-k}\frac{F^{k+\ell}(a)}{\ell!}(x-a)^{\ell}$ and a similar expression holds for $D^{k}(T_{a}^{m}G)(x)$. 
Using the fact $F$ and $G$ are Whitney fields of class $C^{m,\omega}$ and making $C$ larger if necessary, we may assume that for all $a, x\in K$ and $0\leq k\leq m$,
\begin{equation}\label{modulus1F}
|F^{k}(x)-D^{k}(T_{a}^{m}F)(x) | \leq C\omega (|x-a|)|x-a|^{m-k},
\end{equation}
\begin{equation}\label{modulus1G}
|G^{k}(x)-D^{k}(T_{a}^{m}G)(x) | \leq C\omega (|x-a|)|x-a|^{m-k}.
\end{equation}
Since $T_{a}^{m}F=T_{a}^{m}f$ for $a\in K$, Lemma~\ref{claimtaylorest} implies that, for $a\in K$, $x\in I$, $0\leq k\leq m$, and a possibly larger constant $C$: 
\begin{equation}\label{modulus2F}
|D^kf(x)-D^{k}(T_{a}^{m}F)(x)| \leq C\omega(|x-a|)|x-a|^{m-k},
\end{equation}
\begin{equation}\label{modulus2G}
|D^kg(x)-D^{k}(T_{a}^{m}G)(x)| \leq C\omega(|x-a|)|x-a|^{m-k}.
\end{equation}
Again using Lemma~\ref{claimtaylorest} and making $C$ larger if necessary, we can also assume that for every $0\leq k\leq m$ and $x,y\in I$:
\begin{equation}\label{modctya}
|D^kf(x)-D^kf(y)|\leq C \omega(|x-y|),
\end{equation}
\begin{equation}\label{modctyb}
|D^kg(x)-D^kg(y)|\leq C \omega(|x-y|).
\end{equation}

\begin{proposition}\label{perturb}
There exists a constant $\widetilde{C}>C$ for which the following holds. For each interval $[a_i,b_i]$, there exist $C^{\infty}$ functions $\phi, \psi\colon [a_i,b_i]\to \bbR$ such that
\begin{enumerate}
\item $D^k\phi(a_i)=D^k\phi(b_i)=D^k\psi(a_i)=D^k\psi(b_i)=0$ for $0\leq k\leq m$.
\item $\max\{|D^k\phi|, |D^k\psi|\} \leq \widetilde{C} \omega(b_i-a_i)$ for $0\leq k\leq m$ on $[a_i,b_i]$.
\item $\max\{|D^m\phi(x)-D^m\phi(y)|, |D^m\psi(x)-D^m\psi(y)|\} \leq \widetilde{C} \omega(|x-y|)$ for every pair $x,y\in [a_i,b_i]$.
\item $H(b_i)-H(a_i)=2\int_{a_i}^{b_i}(f+\phi)'(g+\psi)-(g+\psi)'(f+\phi)$.
\end{enumerate}
\end{proposition}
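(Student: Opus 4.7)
The plan is to construct $\phi, \psi$ as combinations of scaled copies of fixed $C^\infty$ bump functions, with amplitudes tuned so that the perturbed curve sweeps exactly the extra horizontal area needed to match the required change in $H$. Setting $L = b_i - a_i$ and
\[E_i := H(b_i) - H(a_i) - 2\int_{a_i}^{b_i}(f'g - g'f),\]
integration by parts (using that $D^k\phi, D^k\psi$ vanish at both endpoints for $0 \leq k \leq m$) shows that property (4) is equivalent to
\[-4\int_{a_i}^{b_i}\bigl(f\psi' + g\phi' + \phi\psi'\bigr) = E_i.\]
The first preliminary step is the bound $|E_i| \leq C' V(a_i, b_i)$. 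After using Remark \ref{LeftInvar} to reduce to $F(a_i) = G(a_i) = H(a_i) = 0$ (so $|f|, |g| \lesssim L$ on $[a_i, b_i]$), $E_i$ differs from $A(a_i, b_i)$ only by terms in which $f, f', g, g'$ are swapped for the Taylor polynomials of $F, G$ at $a_i$. Each resulting error is controlled by a multiple of $V(a_i, b_i)$ via \eqref{modulus2F}, \eqref{modulus2G}, and Corollary \ref{intmax}, and hypothesis (3) finishes the bound.

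Then I would construct $\phi = \phi_1 + \phi_2$ and $\psi = \psi_1$ using two independent pieces. Fix once and for all three $C^\infty$ bumps $\rho, \sigma, \tau\colon [0,1] \to \bbR$, each vanishing to infinite order at the endpoints, with $\rho, \sigma$ supported in $[0, 1/2]$ and satisfying $\int_0^1 \rho\sigma' = c_0 \neq 0$, and $\tau$ supported in $[1/2, 1]$ with $\int_0^1 \tau > 0$. Put $\phi_1(x) = \lambda\, \omega(L) L^m \rho((x - a_i)/L)$ and $\psi_1(x) = \mu\, \omega(L) L^m \sigma((x - a_i)/L)$; the cross term $\int\phi_1\psi_1'$ equals $\lambda\mu c_0 \omega(L)^2 L^{2m}$, matching the first term of $V$, and the remaining contributions $\int f\psi_1',\ \int g\phi_1'$ are of order $\omega(L) L^{m+1}$ and will be bundled into the velocity-matched piece. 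Without loss of generality assume $\int_{a_i}^{b_i}|(T_{a_i}^mG)'| \geq \int_{a_i}^{b_i}|(T_{a_i}^mF)'|$; applying Lemma \ref{Pbig} to the polynomial $(T_{a_i}^mG)'$ produces a subinterval $I^* \subset [a_i + L/2, b_i]$ of length at least $L/(16m^2)$ on which $(T_{a_i}^mG)'$ has constant sign and magnitude at least $M/2$, where $M = \max|(T_{a_i}^mG)'|$. Taking $\phi_2(x) = \nu\, \tau_{I^*}(x)$ (an affine rescaling of $\tau$ to $I^*$), the contribution $-4\int \phi_2 g' = -4\int \phi_2 (T_{a_i}^mG)' + O(\omega(L)^2 L^{2m})$ has definite sign and magnitude $\sim |\nu| M L$. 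Solving the resulting system for $\lambda, \mu, \nu$ (with appropriate signs) produces the identity above, and Corollary \ref{intmax} together with $|E_i| \lesssim V(a_i, b_i)$ yields $|\lambda|, |\mu| \leq 1$ and $|\nu| \lesssim \omega(L) L^m$.

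Properties (1)--(3) then follow. (1) is automatic since each bump is compactly supported in $(a_i, b_i)$. For (2), the amplitude bounds give $|D^k\phi|, |D^k\psi| \lesssim \omega(L) L^{m-k}$ on $[a_i, b_i]$; since $L$ is bounded by $\max K - \min K$, this is at most $\widetilde{C}\omega(L)$ after enlarging $\widetilde{C}$. For (3), the same scaling yields $\|D^{m+1}\phi\|_\infty \lesssim \omega(L)/L$, so by the mean value theorem and Lemma \ref{omegaovert},
\[\bigl|D^m\phi(x) - D^m\phi(y)\bigr| \leq \|D^{m+1}\phi\|_\infty |x-y| \lesssim \frac{\omega(L)}{L}|x-y| \leq \omega(|x-y|)\]
for all $x, y \in [a_i, b_i]$, and similarly for $\psi$.

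The main technical obstacle is the coupled bookkeeping in the second step: the pure-area term $\int\phi_1\psi_1'$, the velocity-matched term $\int\phi_2 g'$, the cross contributions $\int f\psi_1', \int g\phi_1'$, and the Taylor error in the $\phi_2$-contribution must all be arranged with the correct signs to sum to exactly $E_i$ while respecting the prescribed amplitude bounds on $\lambda, \mu, \nu$. It is precisely here that the modulus-dependent velocity $V_\omega$ in hypothesis (3) is essential: with the weaker velocity from \cite{PSZ19} the required amplitude of the velocity-matched bump would have to exceed $\omega(L) L^m$, violating (2)—consistent with the failure described in Theorem \ref{motivation}.
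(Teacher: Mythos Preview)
Your outline follows the paper's strategy closely: reduce via left translation, bound the ``area error'' $E_i$ (the paper's $\mathcal A$) by $CV_\omega$, and then correct with compactly supported bumps whose $m$th derivatives have $\omega$-modulus control. The derivation of (1)--(3) from the scaling $\omega(L)L^m$ and Lemma~\ref{omegaovert} is exactly what the paper does. Where you diverge is in trying to use a single three--parameter template $(\lambda,\mu,\nu)$ in all cases, whereas the paper splits explicitly: when $\int|(T_aF)'|$ or $\int|(T_aG)'|$ dominates $\omega(L)L^m$ it takes $\phi\equiv 0$ (or $\psi\equiv 0$) and a single rescaled bump (Lemma~\ref{fbig}); when both are small it uses a pair $\xi,\eta$ arranged so that $\int\eta\xi'$ is bounded below and then an intermediate value argument in one scalar parameter (Lemma~\ref{fgsmall}).

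The genuine gap in your version is the step ``solving the resulting system for $\lambda,\mu,\nu$''. With the crude bound $|f|,|g|\lesssim L$ you obtain $|\int f\psi_1'|,|\int g\phi_1'|=O(\omega(L)L^{m+1})$, but in the borderline regime $\max(\int|(T_aF)'|,\int|(T_aG)'|)\sim\omega(L)L^m$ one has $V\sim\omega(L)^2L^{2m}$, which can be much smaller than $\omega(L)L^{m+1}$. In that regime neither your cross term $\lambda\mu c_0\omega(L)^2L^{2m}$ nor your $\phi_2$-contribution (which is at most $|\nu|ML\lesssim\omega(L)^2L^{2m}$ under $|\nu|\lesssim\omega(L)L^m$) can absorb these mixed terms, so no admissible $(\lambda,\mu,\nu)$ exists as stated. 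The repair is either to make the paper's case split explicit (set $\lambda=\mu=0$ in the velocity--dominated regime) or to prove the sharper estimate $|\int f\psi_1'|\lesssim|\mu|V$ --- which does hold once you use $|f(x)|\le\int_{a_i}^x|f'|\le\int|(T_aF)'|+C\omega(L)L^m$ and Corollary~\ref{intmax} --- and then run an intermediate value argument; this is essentially what the paper's case analysis accomplishes.

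Two smaller points. First, your integration-by-parts identity has a sign error: the correct reduction of (4) is $E_i=-4\int(f\psi'-g\phi'+\phi\psi')$, i.e.\ the $g\phi'$ term enters with the opposite sign. Second, Lemma~\ref{Pbig} applied to $(T_aG)'$ on $[a_i,b_i]$ need not produce an interval inside the right half $[a_i+L/2,b_i]$; you would have to apply it to the half-interval and then argue (via a standard polynomial comparison depending only on $m$) that the maximum there is comparable to $M$. The paper sidesteps this by not separating supports in the velocity case at all.
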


The proof of Proposition \ref{perturb} will require several steps. Fix an interval $[a,b]$ of the form $[a_i,b_i]$ for some $i\geq 1$.

\begin{claim}
To prove Proposition \ref{perturb}, we may assume $F(a)=G(a)=H(a)=0$.
\end{claim}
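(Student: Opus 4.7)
The plan is to reduce to zero initial values by left-translating the data in the Heisenberg group. I would define
\[\hat{F}^0 := F - F(a), \quad \hat{G}^0 := G - G(a), \quad \hat{F}^k := F^k, \quad \hat{G}^k := G^k \text{ for } 1\leq k \leq m,\]
and $\hat{H}^k := H^k - 2G(a)F^k + 2F(a)G^k$ for $0\leq k\leq m$, so that $(\hat{F},\hat{G},\hat{H})$ corresponds to the Heisenberg left-translation $(F(a),G(a),H(a))^{-1}(F,G,H)$ (as in Remark~\ref{LeftInvar}). Simultaneously, I would replace the $C^{m,\omega}$ extensions $f,g$ produced by Theorem~\ref{classicalWhitneyomega} with $\hat{f} := f - F(a)$ and $\hat{g} := g - G(a)$. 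By construction $\hat{F}(a) = \hat{G}(a) = \hat{H}(a) = 0$ and $D^k\hat{f}|_K = \hat{F}^k$, $D^k\hat{g}|_K = \hat{G}^k$.

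Next I would verify that $(\hat{F},\hat{G},\hat{H})$ still satisfies the hypotheses of Theorem~\ref{newthm}. Condition (1) is immediate, since the translation modifies the zeroth-order data by constants and modifies $\hat{H}^k$ by a fixed linear combination of Whitney fields of class $C^{m,\omega}$. Condition (2) is preserved by a direct expansion: applying the right-hand side of \eqref{Hpoly} to the hatted data, every term with $i\geq 1$ remains unchanged, while the $i=0$ contribution $\hat{F}^k\hat{G}^0 - \hat{G}^k\hat{F}^0 = (F^kG - G^kF) - G(a)F^k + F(a)G^k$ produces exactly the correction relating $H^k$ to $\hat{H}^k$. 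Condition (3) is precisely the content of Remark~\ref{LeftInvar}, which asserts $\hat{A}=A$ and $\hat{V}=V$ under the left-translation.

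Finally, assuming Proposition~\ref{perturb} has been proved under the zero initial-value hypothesis, we obtain perturbations $\phi,\psi$ on $[a,b]=[a_i,b_i]$ satisfying (1)--(4) for the hatted data. Conditions (1)--(3) are conditions on $\phi,\psi$ alone, so they transfer to the original data without change. For condition (4), I would use $\hat{f}' = f'$, $\hat{g}' = g'$, and the boundary vanishing $\phi(a)=\phi(b)=\psi(a)=\psi(b)=0$ to compute
\begin{align*}
2\int_a^b \bigl[(\hat{f}+\phi)'(\hat{g}+\psi) &- (\hat{g}+\psi)'(\hat{f}+\phi)\bigr] \\
&= 2\int_a^b \bigl[(f+\phi)'(g+\psi) - (g+\psi)'(f+\phi)\bigr] \\
&\quad - 2G(a)[F(b)-F(a)] + 2F(a)[G(b)-G(a)].
\end{align*}
Combined with the identity $\hat{H}(b) - \hat{H}(a) = H(b) - H(a) - 2G(a)[F(b)-F(a)] + 2F(a)[G(b)-G(a)]$, this shows that condition (4) for the hatted system is equivalent to condition (4) for the original system, so the same $\phi,\psi$ complete the proof. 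I anticipate no genuine obstacle here: the only mild care needed is to ensure that $\widetilde{C}$ in the conclusion of Proposition~\ref{perturb} does not end up depending on $a$, which is automatic because the translation only shifts zeroth-order values by constants and the relevant bounds \eqref{modulus1F}--\eqref{modctyb} pass to $(\hat{f},\hat{g})$ with the same constant $C$.
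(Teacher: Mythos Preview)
Your proposal is correct and follows essentially the same left-translation argument as the paper: define $(\hat F,\hat G,\hat H)$ via the Heisenberg translation $(F(a),G(a),H(a))^{-1}(F,G,H)$, set $\hat f=f-F(a)$, $\hat g=g-G(a)$, note that the estimates \eqref{quotientcvg}--\eqref{modctyb} persist with the same constants, and then check that condition~(4) in Proposition~\ref{perturb} for the hatted data is equivalent to condition~(4) for the original data via the integral identity you wrote down. The only minor difference is that you go further than necessary: you verify that the hatted data satisfy all three hypotheses of Theorem~\ref{newthm}, including the derivative identity~\eqref{Hpoly} for every $\hat H^k$, whereas the paper simply notes that $\hat H^k$ for $k\ge 1$ may be chosen arbitrarily since Proposition~\ref{perturb} only involves $H=H^0$.
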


\begin{proof}
Suppose $f, g, F, G, H$ satisfy \eqref{quotientcvg}--\eqref{modctyb} where $F(a)$, $G(a)$, and $H(a)$ are arbitrary. 
Define new mappings $\hat{f}=f-f(a)$, $\hat{g}=g-g(a)$ and define collections 
$\hat{F}=(\hat{F}^k)_{k=0}^m$, $\hat{G}=(\hat{G}^k)_{k=0}^m$, and $\hat{H}=(\hat{H}^k)_{k=0}^m$
of continuous, real-valued functions on $K$ so that
$$
(\hat{F},\hat{G},\hat{H}) = (F(a),G(a),H(a))^{-1}(F,G,H),
$$
$\hat{F}^k = F^k$, $\hat{G}^k = G^k$, and $\hat{H}^k$ is chosen arbitrarily for $1 \leq k \leq m$.
Here again we use the convention $F=F^0$ etc.
It is easy to verify that the analogues of \eqref{quotientcvg}--\eqref{modctyb} hold for $\hat{f},\hat{g},\hat{F},\hat{G}$ with the same constants. If we now assume that Proposition \ref{perturb} has been proven for $\hat{f},\hat{g},\hat{F},\hat{G},\hat{H}$ satisfying 
\eqref{quotientcvg}--\eqref{modctyb} and the initial condition $\hat{F}(a)=\hat{G}(a)=\hat{H}(a)=0$, 
then we have a constant $\widetilde{C}>C$ and $C^{\infty}$ functions $\phi, \psi\colon [a,b]\to \bbR$ such that
\begin{enumerate}[(i)]
\item $D^k\phi(a)=D^k\phi(b)=D^k\psi(a)=D^k\psi(b)=0$ for $0\leq k\leq m$.
\item $\max\{|D^k\phi|, |D^k\psi|\} \leq \widetilde{C} \omega(b-a)$ for $0\leq k\leq m$ on $[a,b]$.
\item $\max\{|D^m\phi(x)-D^m\phi(y)|, |D^m\psi(x)-D^m\psi(y)|\} \leq \widetilde{C} \omega(|x-y|)$ for every pair $x,y\in [a,b]$.
\item $\hat{H}(b)-\hat{H}(a)=2\int_{a}^{b}(\hat{f}+\phi)'(\hat{g}+\psi)-(\hat{g}+\psi)'(\hat{f}+\phi)$.
\end{enumerate}
Simple calculations yield as before
\[\hat{H}(b)-\hat{H}(a)
=\hat{H}(b)
=H(b)-H(a)+2F(a)G(b)-2G(a)F(b)\]
and
\begin{align*}
2\int_{a}^{b}((\hat{f}+\phi)'(\hat{g}+\psi)-(\hat{g}+\psi)'(\hat{f}+\phi))
&= 2\int_{a}^{b}((f+\phi)'(g+\psi)-(g+\psi)'(f+\phi))\\
&\qquad  +2f(a)g(b)-2g(a)f(b).
\end{align*}
Thus (iv) is equivalent to
\[H(b)-H(a)=2\int_{a}^{b}(f+\phi)'(g+\psi)-(g+\psi)'(f+\phi).\]
This is the desired statement of (4) for the general case, so the claim is proven.
\end{proof}

Assume $F(a)=G(a)=H(a)=0$. Temporarily denote $A(a,b), V(a,b), \omega(b-a)$ by  $A, V, \omega$ respectively. 
We now have
\[A=H(b)-H(a)-2\int_{a}^{b}((TF)'(TG)-(TG)'(TF))\]
and
\[V= \omega^{2}\cdot (b-a)^{2m} + \omega \cdot (b-a)^{m} \int_{a}^{b}(|(TF)'|+|(TG)'|). \]
Define
\[ \mathcal{A}:=H(b)-H(a)-2\int_{a}^{b}(f'g-g'f).\]
A similar argument to the proof of Proposition~\ref{nec}, increasing $C$ if necessary, yields
\[ \left| \int_{a}^{b}(f'g-g'f)-\int_{a}^{b} ((Tf)'Tg-Tf(Tg)') \right| \leq CV. 
\]
Combining this with \eqref{quotientcvg} shows that for some larger constant $C$,
\begin{equation}\label{estforperturb}
|\mathcal{A}| 
= \left| H(b)-H(a)-2\int_{a}^{b} (f'g-g'f) \right| 
\leq CV.
\end{equation}
Notice Proposition \ref{perturb}(4) can be rewritten as:
\begin{equation}\label{pertforAcal}
2\int_{a}^{b}((f'\psi-\psi'f)+(\phi'g-g'\phi)+(\phi'\psi-\psi'\phi))=\mathcal{A}.
\end{equation}
Next, observe $\int_{a}^{b}(f'\psi-\psi'f)=2\int_{a}^{b}f' \psi$ for any $C^{\infty}$ function $\psi$ which vanishes at $a$ and $b$. A similar equation holds for the other terms in \eqref{pertforAcal}. 
Hence constructing $\phi$ and $\psi$ which satisfy Proposition \ref{perturb}(4) is equivalent to solving
\begin{equation}
\label{goalint}
4\int_{a}^{b}(\psi f' - \phi g' + \psi \phi')=\mathcal{A},
\end{equation}
where $\mathcal{A}$ satisfies $|\mathcal{A}|\leq CV$. We now show how to do this subject to the constraints given in Proposition~\ref{perturb}. We will divide the constructions of $\phi$ and $\psi$ into two cases.

First we construct a useful map that will be helpful in both cases.
Fix the constant $C\geq 1$ for which the estimates
\eqref{quotientcvg}--\eqref{estforperturb}
hold.

\begin{lemma}\label{mollifier}
Suppose $J \subseteq [a,b]$ is a closed interval of length at least $(b-a)/18m^2$.
There is a constant $C_0 > 0$ depending only on $m$ and $\mathrm{diam}(K)$ 
and a non-negative $C^{\infty}$ function $\eta:\mathbb{R} \to \mathbb{R}$
such that
\begin{enumerate}[(a)]
\item $\eta$ vanishes outside of $J$,
\item $\eta \geq 48m^2  \omega(b-a) (b-a)^{m}$ on the middle third of $J$,
\item $\eta' \geq 81 m^2 \omega(b-a)(b-a)^{m-1}$ on a subinterval of $J$ with length $\ell(J)/6$,
\item $\eta \leq C_0 \omega(b-a) (b-a)^m$ 
\item $|D^i\eta| \leq C_0 \omega(b-a)$ on $[a,b]$ for $0\leq i\leq m$,
\item $|D^{m}\eta(x)-D^{m}\eta(y)|\leq C_0 \omega(|x-y|)$ for all $x,y\in [a,b]$.
\end{enumerate}
\end{lemma}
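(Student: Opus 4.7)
The plan is to build $\eta$ by affine rescaling of a single universal bump function on $[0,1]$, with amplitude $\lambda := C_1\,\omega(b-a)(b-a)^m$ for a large enough constant $C_1 = C_1(m)$. Once the model bump is fixed, properties (a)--(e) reduce to straightforward computation using the hypothesis $\ell(J) \geq (b-a)/18m^2$ together with $b-a \leq \mathrm{diam}(K)$; property (f) is the only place where the structure of the modulus $\omega$ enters in a non-trivial way, via Lemma~\ref{omegaovert}.

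First I would fix, once and for all, a non-negative $\rho \in C^{\infty}(\mathbb{R})$ supported in $[0,1]$ with the following two positivity bounds: there exist absolute constants $c_1, c_2 > 0$ such that $\rho \geq c_1$ on $[1/3, 2/3]$ and $\rho' \geq c_2$ on $[1/6, 1/3]$. The standard bump $\rho(x) = \exp(-1/[x(1-x)])$ on $(0,1)$ (extended by zero) is strictly increasing on $(0, 1/2)$, so both bounds hold (with, e.g., $c_1 = e^{-9/2}$ and $c_2 = \min_{[1/6,1/3]} \rho' > 0$). Now write $J = [\alpha, \beta]$ with $\ell := \beta-\alpha$ and set
\[
\eta(x) := \lambda\,\rho\!\left(\tfrac{x-\alpha}{\ell}\right), \qquad \lambda := C_1\,\omega(b-a)(b-a)^m,
\]
extended by zero outside $J$. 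Property (a) and the $C^\infty$ smoothness are immediate.

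Next I would verify the two lower bounds (b) and (c), which fix the size of $C_1$. For (b): the middle third of $J$ is the preimage of $[1/3, 2/3]$ under $x \mapsto (x-\alpha)/\ell$, so $\eta \geq \lambda c_1$ there; the choice $C_1 \geq 48m^2/c_1$ gives (b). For (c): the preimage of $[1/6, 1/3]$ is a subinterval of $J$ of length $\ell/6$, on which $\eta'(x) = (\lambda/\ell)\,\rho'((x-\alpha)/\ell) \geq \lambda c_2/\ell \geq C_1 c_2\,\omega(b-a)(b-a)^{m-1}$, using $\ell \leq b-a$; so $C_1 \geq 81m^2/c_2$ suffices. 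The upper bounds (d) and (e) then follow from $|D^i\eta| \leq \lambda\,\|\rho^{(i)}\|_\infty/\ell^i$ combined with $\ell \geq (b-a)/18m^2$, with the residual factor $(b-a)^{m-i}$ absorbed into a constant $C_0 = C_0(m, \mathrm{diam}(K))$ using $b-a \leq \mathrm{diam}(K)$.

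The only step that demands anything beyond routine bookkeeping is (f). Differentiating $m$ times and applying the mean value theorem to $\rho^{(m)}$,
\[
|D^m\eta(x) - D^m\eta(y)| \leq \frac{\lambda\,\|\rho^{(m+1)}\|_\infty}{\ell^{m+1}}\,|x-y| \leq C\,\frac{\omega(b-a)}{b-a}\,|x-y|
\]
for all $x,y \in [a,b]$, with $C = C(m, \mathrm{diam}(K))$. Since $|x-y| \leq b-a$, Lemma~\ref{omegaovert} (monotonicity of $t \mapsto \omega(t)/t$) yields $\omega(b-a)\,|x-y|/(b-a) \leq \omega(|x-y|)$, which upgrades the right-hand side to $C\,\omega(|x-y|)$ and gives (f). I do not anticipate a real obstacle here: the lemma is essentially a scaling exercise, and the only conceptual input beyond calculus is the use of Lemma~\ref{omegaovert} to convert the linear bound $|x-y|/(b-a)$ into the modulus $\omega(|x-y|)$.
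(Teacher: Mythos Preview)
Your proof is correct and follows essentially the same approach as the paper: affinely rescale a fixed smooth bump to fit $J$, choose the amplitude proportional to $\omega(b-a)(b-a)^m$, read off (a)--(e) from the scaling together with $\ell(J)\geq (b-a)/18m^2$ and $b-a\leq \mathrm{diam}(K)$, and obtain (f) by one more derivative plus Lemma~\ref{omegaovert}. The only cosmetic differences are that the paper uses the even bump $\varphi(x)=e^{-1/(1-x^2)}$ on $(-1,1)$ centered at the midpoint of $J$ with the explicit amplitude $48e^{9/8}m^2\omega(b-a)(b-a)^m$, whereas you parameterize over $[0,1]$ and leave $C_1$ implicit.
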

\begin{proof}
Let $\varphi(x)=e^{-1/(1-x^2)}$ if $x\in (-1,1)$ and $\varphi(x)=0$ otherwise.
Fix $\hat{C}_0 \geq 1$ depending only on $m$ and $\mathrm{diam}(K)$ such that
\[ \max_{i=0,1,\ldots, m} 48e^{9/8}36^{i}m^{2i+2}( \|D^{i}\varphi\|_{\infty} +1)((\mathrm{diam}(K))^{m-i}+1) \leq \hat{C}_0.\]
Let $x_{0}$ be the midpoint of $J$. Define
\[\eta(x)=48e^{9/8}m^{2}\omega(b-a)(b-a)^{m}\varphi\left(\frac{2(x-x_{0})}{\ell(J)}\right).\]
Clearly $\eta$ vanishes outside $J$. Since $\varphi \geq e^{-9/8}$ on $[-\tfrac13,\tfrac13]$, we have in the middle third of $J$ that
\[\eta \geq  48m^{2}\omega(b-a)(b-a)^{m}.\]
It is easy to check $\varphi' \geq \tfrac{27}{32}e^{-9/8}$ on $[-\tfrac23,-\tfrac13]$ since $\varphi''\leq 0$ in this region. Hence, in the second sixth of $J$ we have
\begin{align*}
\eta'
&\geq 48 e^{9/8} m^{2}\omega(b-a)(b-a)^{m} \cdot \tfrac{27}{32}e^{-9/8} \cdot \frac{2}{\ell(J)}\\
&\geq 81 m^2 \omega(b-a)(b-a)^{m-1}.
\end{align*}

On the other hand, it is easy to verify that $\eta \leq \hat{C}_{0}\omega(b-a)(b-a)^{m}$. Since $\ell(J)\geq (b-a)/18m^{2}$, we have for all $x\in [a,b]$ and $0\leq i\leq m$,
\begin{align*}
    |D^i \eta (x)| 
    &= 
    48 e^{9/8} m^2 \omega(b-a) (b-a)^m (D^i\varphi) \left(\frac{2(x-x_0)}{\ell(J)} \right) \cdot \left( \frac{2}{\ell(J)} \right)^i\\
    &\leq
    48 e^{9/8} 36^{i} m^{2i+2} \Vert D^i \varphi \Vert_\infty \omega(b-a) (b-a)^{m-i} \\
    &\leq
    \hat{C}_0 \omega(b-a)
\end{align*}
Also, for any $x,y\in [a,b]$, using Lemma \ref{omegaovert},
\begin{align*}
    |D^m \eta (x) - D^m \eta(y)|
    &\leq
    \hat{C}_0 \omega(b-a)
    \left| D^m\varphi\left(\frac{2(x-x_0)}{\ell(J)} \right) - D^m\varphi\left(\frac{(2(y-x_0)}{\ell(J)} \right) \right|\\
    &\leq
    \hat{C}_0  \| D^{m+1} \varphi\|_{\infty} \omega(b-a) \,  \left| \frac{2(x-y)}{\ell(J)} \right| \\
     &\leq
     \hat{C}_0  36m^2 \| D^{m+1} \varphi\|_{\infty} \omega(b-a) \, \left| \frac{x-y}{b-a} \right| \\
     &\leq
     \hat{C}_0  36m^2 \| D^{m+1} \varphi\|_{\infty} \omega \left( |x-y| \right).
\end{align*}
Setting $C_0 = \hat{C}_0 36 m^2 (\| D^{m+1} \varphi\|_{\infty} + 1)$ completes the proof. 
\end{proof}

\begin{lemma}\label{fbig}
Suppose
\begin{equation}
\label{fbigassumption}
\int_{a}^{b} |(Tf)'| \geq \max \left( \int_{a}^{b} |(Tg)'|,\ CC_{0} \omega(b-a)(b-a)^{m} \right).
\end{equation}
Then there exists a $C^{\infty}$ map $\psi$ on $[a,b]$ 
satisfying
\begin{enumerate}
\item $D^i\psi(a)=D^i\psi(b)=0$ for $0\leq i\leq m$,
\item $|D^i\psi(x)|\leq CC_{0}\omega(b-a)$ on $[a,b]$ for $0\leq i\leq m$,
\item $|D^{m}\psi(x)-D^{m}\psi(y)|\leq CC_0 \omega(|x-y|)$ for $x,y\in [a,b]$,
\item $4 \int_{a}^{b} \psi f' =\mathcal{A}$.
\end{enumerate}
\end{lemma}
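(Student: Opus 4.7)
The plan is to construct $\psi$ as a scalar multiple $s\eta$ of a bump function supplied by Lemma \ref{mollifier} on a carefully chosen subinterval of $[a,b]$, with $s$ tuned so that condition (4) holds. The remaining conditions (1)--(3) will then follow from the corresponding properties of $\eta$ provided $|s|$ is bounded by a constant depending only on $C$ and $m$.

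First, I would apply Lemma \ref{Pbig} to $(Tf)'$ (a polynomial of degree at most $m-1$) on $[a,b]$ to obtain a closed subinterval $J\subseteq[a,b]$ of length at least $(b-a)/(18m^2)$ on which $|(Tf)'|\geq M/2$, where $M:=\max_{[a,b]}|(Tf)'|$. By continuity, $(Tf)'$ has constant sign on $J$. Combining hypothesis \eqref{fbigassumption} with Corollary \ref{intmax} yields $M(b-a)\geq \int_a^b |(Tf)'|\geq CC_0\omega(b-a)(b-a)^m$, so $M\geq CC_0\omega(b-a)(b-a)^{m-1}$. Meanwhile, \eqref{modulus2F} gives $|f'-(Tf)'|\leq C\omega(b-a)(b-a)^{m-1}$ on $[a,b]$, so choosing $C_0$ large enough (depending only on $m$) the error is dominated by $M/4$. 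Consequently, $f'$ has the same sign as $(Tf)'$ on $J$ and satisfies $|f'|\geq M/4$ there.

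Second, I would take $\eta$ from Lemma \ref{mollifier} applied to $J$ and set $\psi:=s\eta$ with $s:=\mathcal{A}/\bigl(4\int_a^b \eta f'\bigr)$. Condition (4) is then automatic, and (1)--(3) reduce to controlling $|s|$. The lower bound for $\bigl|\int_a^b \eta f'\bigr|$ comes from integrating over the middle third of $J$, where $\eta\geq 48m^2\omega(b-a)(b-a)^m$ by Lemma \ref{mollifier}(b) and $|f'|\geq M/4$; since $f'$ has constant sign on $J$ and $\eta\geq 0$ is supported there, no cancellation occurs. Using $M\geq \int_a^b|(Tf)'|/(b-a)$ from Corollary \ref{intmax}, this produces a lower bound proportional to $\omega(b-a)(b-a)^m\int_a^b|(Tf)'|$. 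For the matching upper bound, \eqref{estforperturb} gives $|\mathcal{A}|\leq CV$; under \eqref{fbigassumption}, the $|(Tg)'|$ contribution to $V$ is absorbed by $\int|(Tf)'|$, and the quadratic term $\omega^2(b-a)^{2m}$ is controlled by substituting one factor of $\omega(b-a)(b-a)^m$ via \eqref{fbigassumption}, yielding $V\lesssim \omega(b-a)(b-a)^m\int_a^b|(Tf)'|$. Combining the two bounds gives $|s|\lesssim 1$ with a constant depending only on $C$ and $m$, which after enlarging the constant in the statement as necessary establishes (1)--(3).

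The main obstacle is the first step: securing constant-sign and magnitude control of $f'$ over $J$. This relies crucially on hypothesis \eqref{fbigassumption}, which simultaneously forces $M$ to dominate the Taylor approximation error from \eqref{modulus2F} and allows $V$ to be rewritten purely in terms of $\int|(Tf)'|$. Without \eqref{fbigassumption} neither could be arranged, which is presumably why the complementary case (where instead $\int|(Tg)'|$ dominates) will have to be handled by a separate, symmetric argument and why a further case is needed when both integrals are small compared to $\omega(b-a)(b-a)^m$.
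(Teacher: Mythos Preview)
Your proposal is correct and follows the same approach as the paper: choose $J$ via Lemma~\ref{Pbig}, take $\eta$ from Lemma~\ref{mollifier} supported in $J$, and set $\psi=\bigl(\mathcal{A}/(4\int_a^b\eta f')\bigr)\eta$, then bound the scaling factor by combining the upper bound $|\mathcal{A}|\leq CV\lesssim \omega(b-a)(b-a)^m\int_a^b|(Tf)'|$ with a matching lower bound on $|\int_a^b\eta f'|$. The only cosmetic difference is in obtaining that lower bound: the paper first bounds $|\int_a^b\eta(Tf)'|$ from below (using that $\eta$ is given the sign of $(Tf)'$ on $J$) and then controls the error $\int_a^b|\eta||f'-(Tf)'|$ via \eqref{modulus2F} and \eqref{fbigassumption}, whereas you use those same ingredients to show $f'$ itself has constant sign and $|f'|\geq M/4$ on $J$, avoiding the separate error term---this works because $C_0$ is automatically large (at least $36m^2$ from the proof of Lemma~\ref{mollifier}), so no enlargement is actually needed.
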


\begin{proof}
Since $C, C_{0}\geq 1$, we have 
\begin{equation}
\label{Abound1}
|\mathcal{A}|\leq CV\leq 3C  \omega(b-a) \cdot (b-a)^{m}\int_{a}^{b}|(Tf)'|.
\end{equation}
Applying Lemma \ref{Pbig} to $(Tf)'$ gives a closed subinterval $J\subset [a,b]$ of length at least $(b-a)/4m^{2}$ such that $|(Tf)'|\geq M/2$ in $J$, where $M=\max_{[a,b]}|(Tf)'|$. If $M=0$, then $V=0$, so $\mathcal{A}=0$ and we may choose $\psi$ identically zero.
Assume $M\neq 0$. In particular, $(Tf)'$ is not identically zero on $J$.
By Lemma~\ref{mollifier}, there exists a $C^{\infty}$ map $\eta$ on $[a,b]$ with the following properties:
\begin{enumerate}[(a)]
\item $\eta$ vanishes outside of $J$,
\item $|\eta| \geq 48m^2  \omega(b-a) (b-a)^{m}$ on the middle third of $J$,
\item $|\eta| \leq C_0 \omega(b-a) (b-a)^m$ on $[a,b]$,
\item $|D^i\eta(x)| \leq C_0 \omega(b-a)$ for $x\in [a,b]$ and $0\leq i\leq m$,
\item $|D^{m}\eta(x)-D^{m}\eta(y)|\leq C_0 \omega(|x-y|)$ for $x,y\in [a,b]$,
\item the sign of $\eta$ is chosen to be the same as the sign of $(Tf)'$ on $J$ 
\end{enumerate}
Now define $\psi$ on $[a,b]$ by scaling $\eta$ by a constant:
$$
\psi = \left( \frac{\mathcal{A}}{4 \int_a^b \eta f'} \right) \eta.
$$
That $\int_a^b \eta f'\neq 0$ will be justified below. Clearly properties (1) and (4) hold. It remains to verify (2) and (3).
We first bound $\left| \int_a^b \eta f' \right|$ from below. Since the middle third of $J$ has length at least $(b-a)/12m^{2}$ and $\eta$ has the same sign as $(Tf)'$ on $J$,
\begin{align*}
\left| \int_{a}^{b} \eta (Tf)' \right|
&=\int_J \eta (Tf)'\\
&\geq \frac{(b-a)}{12m^2} 48m^2 \omega(b-a) (b-a)^{m} \frac{M}{2}\\
&= 2 \omega(b-a)\cdot (b-a)^{m+1}  M \\
&\geq 2 \omega(b-a)(b-a)^{m} \int_{a}^{b}|(Tf)'|.
\end{align*}

Using $|f'-(Tf)'|\leq C\omega(b-a)\cdot (b-a)^{m-1}$ from \eqref{modulus2F} 
together with \eqref{fbigassumption} gives
\[\int_{a}^{b}| \eta f' -\eta(Tf)'|
\leq C C_0 (\omega(b-a) (b-a)^{m})^2
\leq \omega(b-a) (b-a)^{m} \int_{a}^{b}|(Tf)'|. \]
Combining the previous two steps gives
\begin{equation}
\label{ebound}
\left| \int_{a}^{b}\eta f' \right| 
\geq \omega(b-a) (b-a)^{m} \int_{a}^{b}|(Tf)'|.
\end{equation}
From \eqref{Abound1}, \eqref{ebound}, and Lemma~\ref{mollifier} we have
$$
|D^i\psi| 
= \frac{|\mathcal{A}|}{4 \left| \int_a^b \eta f' \right|} |D^i \eta|
\leq CC_0\omega(b-a).
$$
This proves (2). Then (3) is proven similarly, 
\begin{align*}
    |D^m\psi(x) - D^m\psi(y)| 
= \frac{|\mathcal{A}|}{4 \left| \int_a^b \eta f' \right|} |D^m \eta(x) - D^m \eta(y)|
\leq CC_0 \omega ( |x-y| ).
\end{align*}
\end{proof}

Lemma \ref{fbig} has a direct analogue involving $\phi$ with conclusion $-4 \int_{a}^{b} \phi g' =\mathcal{A}$ if 
\begin{equation}
\label{gbigassumption}
\int_{a}^{b} |(Tg)'| \geq \max \left( \int_{a}^{b} |(Tf)'|,\ C C_{0} \omega(b-a)(b-a)^{m}  \right).
\end{equation}
We now study the case in which \eqref{fbigassumption} and \eqref{gbigassumption} both fail.

\begin{lemma}\label{fgsmall}
Suppose
\[C C_{0} \omega(b-a)(b-a)^{m} > \max \left( \int_{a}^{b} |(Tf)'|,\ \int_{a}^{b} |(Tg)'| \right).\]
Then there exist $C^{\infty}$ maps $\phi$ and $\psi$ 
and a constant $C_1 \geq 1$ depending only on $C$, $m$ and $\text{diam}(K)$
such that
\begin{enumerate}
\item $D^i\psi(a)=D^i\psi(b)=D^i\phi(a)=D^i\phi(b)=0$ for $0\leq i\leq m$,
\item $\max \{|D^i\psi(x)|, |D^i\phi(x)| \} \leq 6C_1 \omega(b-a)$ on $[a,b]$ for $0\leq i\leq m$,
\item $\max \{|D^{m}\psi(x)-D^{m}\psi(y)|, |D^{m}\phi(x)-D^{m}\phi(y)|\} \leq 6C_1 \omega(|x-y|)$ 
for all $x,y \in [a,b]$,
\item $4 \int_{a}^{b} (\psi f' - \phi g' + \psi \phi') =\mathcal{A}$.
\end{enumerate}
\end{lemma}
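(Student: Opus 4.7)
The plan is to exploit the fact that, in this regime, both $V$ and $|\mathcal{A}|$ are controlled by a multiple of $\omega(b-a)^2(b-a)^{2m}$ alone, since
\[
V \leq \omega(b-a)^{2}(b-a)^{2m} + \omega(b-a)(b-a)^{m} \cdot 2CC_0 \omega(b-a)(b-a)^{m} = (1 + 2CC_0)\, \omega(b-a)^{2}(b-a)^{2m}.
\]
Unlike in Lemma \ref{fbig}, neither of the linear terms $\int \psi f'$ or $\int \phi g'$ can be made to dominate the equation \eqref{goalint}: both of them are automatically of order $\omega(b-a)^2(b-a)^{2m}$, matching the size of $\mathcal{A}$ rather than beating it. So the dominant contribution to $\mathcal{A}/4$ must come from the quadratic cross term $\int \psi \phi'$, and the task is to build two bumps whose cross term has a definite sign and is bounded below by a fixed multiple of $\omega(b-a)^{2}(b-a)^{2m}$.

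To this end I apply Lemma \ref{mollifier} twice. First take $J = [a,b]$ to obtain a non-negative $C^{\infty}$ bump $\eta$; inspecting the proof of Lemma \ref{mollifier} shows that $\eta' \geq 81 m^2 \omega(b-a)(b-a)^{m-1}$ on a subinterval $J^{\ast} \subset [a,b]$ of length $(b-a)/6$. Since $(b-a)/6 \geq (b-a)/(18m^2)$ for $m \geq 1$, I apply Lemma \ref{mollifier} a second time with $J = J^{\ast}$ to obtain a non-negative bump $\tilde{\eta}$ supported in $J^{\ast}$ with $\tilde{\eta} \geq 48m^2\omega(b-a)(b-a)^m$ on the middle third of $J^{\ast}$. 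Set $\phi = \mu \eta$ and $\psi = \nu \tilde{\eta}$ for constants $\mu, \nu$ to be determined. Because $\tilde{\eta} \geq 0$ and $\eta' > 0$ on the support of $\tilde{\eta}$, integrating the pointwise lower bounds over the middle third of $J^{\ast}$ (of length at least $(b-a)/18$) gives
\[
I_3 := \int_a^b \tilde{\eta} \, \eta' \geq c_2 \, \omega(b-a)^2 (b-a)^{2m}
\]
for an explicit positive constant $c_2$ depending only on $m$.

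Equation \eqref{goalint} then becomes $4 \nu I_1 - 4 \mu I_2 + 4 \mu \nu I_3 = \mathcal{A}$, with $I_1 = \int \tilde{\eta} f'$ and $I_2 = \int \eta g'$. Using Lemma \ref{mollifier}(d), the Taylor estimates \eqref{modulus2F}--\eqref{modulus2G}, and the smallness hypothesis on $\int|(Tf)'|$ and $\int|(Tg)'|$, one sees $\int_a^b |f'| + \int_a^b |g'| \leq c_0 \omega(b-a)(b-a)^m$ and hence $|I_1|, |I_2| \leq c_1 \omega(b-a)^2 (b-a)^{2m}$ for constants $c_0, c_1$ depending only on $C, C_0, m, \mathrm{diam}(K)$. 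Fix $\mu$ equal to a specific constant larger than $c_1/c_2$. Then the coefficient $4I_1 + 4\mu I_3$ of $\nu$ in the now \emph{linear} equation is bounded below in absolute value by a fixed multiple of $\omega(b-a)^2(b-a)^{2m}$, and solving for $\nu$ produces $|\nu|$ bounded by a constant of the same type.

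With $|\mu|, |\nu|$ bounded by an absolute constant $C_1$ depending only on $C, m, \mathrm{diam}(K)$, conclusions (1), (2), (3) of the lemma follow at once from (a), (e), (f) of Lemma \ref{mollifier}, while (4) is precisely the equation just solved. The main delicacy is producing the lower bound on $I_3$; this is why $\tilde{\eta}$ must be placed precisely on the ``ascending'' subinterval $J^{\ast}$ of $\eta$ rather than using two disjointly supported bumps (which would give $I_3 = 0$) or taking $\phi = \psi = \eta$ (which would give $\int \eta \eta' = [\eta^2/2]_a^b = 0$).
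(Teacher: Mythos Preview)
Your argument is correct and uses the same underlying construction as the paper: both apply Lemma~\ref{mollifier} once on $[a,b]$ and once on the subinterval where the first bump has large positive derivative, so that the cross term $\int \psi\,\phi'$ is bounded below by a fixed multiple of $\omega(b-a)^{2}(b-a)^{2m}$. The difference lies only in how the two scaling parameters are selected. The paper fixes the heights of both bumps in advance (rescaling by $B/81m^{2}$ and $B/48m^{2}$ with $B=\sqrt{6C(1+2CC_{0})}$) and then splits into three cases: if either $|\int\eta f'|$ or $|\int\xi g'|$ already exceeds $|\mathcal{A}|/24$, a single bump suffices; otherwise an intermediate value argument on $\lambda\mapsto 4\int(\lambda\eta f'-\xi g'+\lambda\eta\xi')$ produces $\lambda\in(0,1)$ hitting $|\mathcal{A}|$. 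You instead leave the outer scale $\mu$ free, choose it once and for all larger than $c_{1}/c_{2}$ so that $\mu I_{3}$ uniformly dominates $|I_{1}|$, and solve the resulting linear equation for $\nu$ directly. Your route is a little cleaner in that it avoids the case split and the (unnecessary, since the map is affine) appeal to the intermediate value theorem; the paper's route has the cosmetic advantage that the final scalars land in $[0,1]$ after the initial $B$-normalization.
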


\begin{proof}
Notice the hypotheses imply
\begin{equation}\label{Abound}
 |\mathcal{A}| \leq C  V \leq C(1+2CC_{0}) \omega(b-a)^{2}(b-a)^{2m}.
\end{equation}
Denote $B := \sqrt{6C(1+2CC_{0})}$. 
Applying Lemma~\ref{mollifier} to the interval $[a,b]$ itself and rescaling the resulting function by $B/81m^2$ gives a $C^\infty$ function $\xi$ 
and a constant $C_1 > 0$ depending only on $C$, $m$ and $\text{diam}(K)$
such that
\begin{enumerate}[(a)]
\item $D^i \xi(a) = D^i \xi(b) = 0$ for $0 \leq i \leq m$,
\item $\xi' \geq B\omega(b-a) (b-a)^{m-1}$ on a subinterval $J$ of $[a,b]$ with length $(b-a)/6$,
\item $|D^i\xi| \leq C_1 \omega(b-a)$ on $[a,b]$ for $0 \leq i \leq m$,
\item $|D^{m}\xi(x)-D^{m}\xi(y)|\leq C_1\omega(|x-y|)$ for all $x,y\in [a,b]$.
\end{enumerate}
Now we apply Lemma~\ref{mollifier} to $J$ and rescale the function by $B/48m^2$ to find a $C^\infty$ function $\eta$ such that
\begin{enumerate}[(a)]
\item $\eta$ vanishes outside of $J$,

\item $\eta\geq B\omega(b-a) (b-a)^{m}$ on the middle third of $J$ (which has length $(b-a)/18$) and is non-negative elsewhere,
\item $|D^i\eta| \leq C_1 \omega(b-a)$ on $[a,b]$ for $0 \leq i \leq m$,
\item $|D^{m}\eta(x)-D^{m}\eta(y)|\leq C_1\omega(|x-y|)$ for all $x,y\in [a,b]$.
\end{enumerate}

We will now explain how we define $\phi$ and $\psi$ in multiple cases.

\textbf{Case 1:}
$\left| \int_{a}^{b} \eta f' \right| \geq  |\mathcal{A}|/24$.

Assume $\mathcal{A}\neq 0$ otherwise there is nothing to prove. Set $\phi \equiv 0$ on $[a,b]$ and $\psi = ( \mathcal{A}/(4\int_a^b \eta f')) \eta$. 
Then (1) and (4) are clearly satisfied. We have
$$
|D^i\psi| 
= \frac{|\mathcal{A}|}{4 \left| \int_a^b \eta f' \right|} |D^i \eta|
\leq 6C_1 \omega(b-a)
\quad
\text{ on }
[a,b]
$$
which gives (2). Property (3) follows similarly.

\textbf{Case 2:}
$\left| \int_{a}^{b} \xi g' \right| \geq  |\mathcal{A}|/24$.

This is identical to the previous case with $\psi \equiv 0$ on $[a,b]$ and $\phi = -( \mathcal{A}/(4\int_a^b \xi g')) \xi$.

\textbf{Case 3:}
$\max \left\{ \left| \int_{a}^{b} \eta f' \right|, \left|\int_{a}^{b} \xi g'  \right| \right\} <  |\mathcal{A}|/24$.

We first have by \eqref{Abound}
$$
\int_{a}^{b} \eta \xi' 
= \int_J \eta \xi'
\geq \tfrac{1}{18} B^{2} \omega(b-a)^{2} (b-a)^{2m} \geq \frac{|\mathcal{A}|}{3},
$$
and hence
$ 4\int_{a}^{b} (\eta f' - \xi g' + \eta \xi') > 
|\mathcal{A}|$.
Now consider $\mathcal{F}\colon \bbR\to \bbR$ defined by 
\[\mathcal{F}(\lambda)=4\int_{a}^{b} ((\lambda \eta) f' - \xi g' + (\lambda \eta) \xi'). \]
Clearly $\mathcal{F}$ is a continuous map with $\mathcal{F}(0)= -4\int_{a}^{b}\xi g' \leq 4\left|\int_{a}^{b}\xi g'\right|  < |\mathcal{A}|/6$ and $\mathcal{F}(1)>|\mathcal{A}|$. Hence, by the intermediate value theorem, there exists $\lambda \in (0,1)$ such that $\mathcal{F}(\lambda)=|\mathcal{A}|$. If $\mathcal{A}\geq 0$ this completes the proof by setting $\phi = \xi$ and $\psi = \lambda \eta$. If $\mathcal{A}<0$, we do the same with the sign of $\eta$ reversed.
\end{proof} 

Let $\widetilde{C}=\max\{ CC_{0}, 6C_1 \}$.

\begin{proof}[Proof of Proposition \ref{perturb}]
Using Lemma \ref{fbig} and Lemma \ref{fgsmall} completes the proof of Proposition~\ref{perturb}. Indeed, if \eqref{fbigassumption} holds we may choose $\phi \equiv 0$ on $[a,b]$ and $\psi$ as in Lemma \ref{fbig}. If \eqref{gbigassumption} holds, then we use the analogue of Lemma \ref{fbig} with $f$ and $\psi$ replaced by $g$ and $\phi$ respectively. If \eqref{fbigassumption} and \eqref{gbigassumption} both fail,  use Lemma \ref{fgsmall}.
\end{proof}

We are now ready to build the $C^{m,\omega}$ horizontal extension of $(F,G,H)$, first on each subinterval $[a_{i},b_{i}]$ and then on the entire interval $I$.

\begin{lemma}
\label{buildcurve}
There is a constant $C_{3}\geq 1$ such that, for all $i\geq 1$, there is a horizontal curve $(\mathcal{F}_{i}, \mathcal{G}_{i}, \mathcal{H}_{i})\colon [a_{i},b_{i}]\to \bbH^{1}$ of class $C^{m,\omega}$ that satisfies
\begin{enumerate}
\item for $0\leq k\leq m$,
$$
\begin{array}{ccc}
D^k \mathcal{F}_{i}(a_{i})=F^{k}(a_{i}), & D^k \mathcal{G}_{i}(a_{i})=G^{k}(a_{i}), & D^k \mathcal{H}_{i}(a_{i})=H^{k}(a_{i}) \\
D^k \mathcal{F}_{i}(b_{i})=F^{k}(b_{i}), & D^k \mathcal{G}_{i}(b_{i})=G^{k}(b_{i}), & D^k \mathcal{H}_{i}(b_{i})=H^{k}(b_{i}).
\end{array}
$$
\item $|D^k \mathcal{F}_{i}(x)-F^{k}(a_{i})|\leq 2\widetilde{C}\omega(b_{i}-a_i)$ 
and 
$|D^k \mathcal{G}_{i}(x)-G^{k}(a_{i})|\leq 2\widetilde{C}\omega(b_{i}-a_i)$
for $0\leq k\leq m$ and $x\in [a_{i},b_{i}]$. 
\item $|D^m \mathcal{F}_{i}(x)-D^{m} \mathcal{F}_{i}(y)|\leq 2\widetilde{C} \omega(|x-y|)$ 
and 
$|D^m \mathcal{G}_{i}(x)-D^{m}\mathcal{G}_{i}(y)|\leq 2\widetilde{C} \omega(|x-y|)$ for all $x, y\in [a_{i},b_{i}]$.
\item $|D^m \mathcal{H}_{i}(x)-D^{m}\mathcal{H}_{i}(y)|\leq C_{3}\omega(|x-y|)$ for all $x,y\in [a_{i},b_{i}]$.
\end{enumerate}
\end{lemma}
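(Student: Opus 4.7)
The plan is to build $(\mathcal{F}_i,\mathcal{G}_i,\mathcal{H}_i)$ on each gap interval $[a_i,b_i]$ by perturbing the first two coordinates $(f,g)$ using Proposition~\ref{perturb}, and then defining the third coordinate as the horizontal lift starting from $H(a_i)$. Concretely, let $\phi,\psi\colon [a_i,b_i]\to\mathbb{R}$ be the $C^\infty$ functions provided by Proposition~\ref{perturb} on $[a_i,b_i]$, and set
$$
\mathcal{F}_i := f|_{[a_i,b_i]} + \phi, \qquad \mathcal{G}_i := g|_{[a_i,b_i]} + \psi, \qquad \mathcal{H}_i(t) := H(a_i) + 2\int_{a_i}^{t}\bigl(\mathcal{F}_i'\mathcal{G}_i - \mathcal{G}_i'\mathcal{F}_i\bigr).
$$
By Lemma~\ref{lift}, $(\mathcal{F}_i,\mathcal{G}_i,\mathcal{H}_i)$ is horizontal by construction, and since $f,g\in C^{m,\omega}(\mathbb{R})$ and $\phi,\psi$ are smooth with $\omega$-controlled $m$-th derivative, the curve lies in $C^{m,\omega}([a_i,b_i],\mathbb{R}^3)$.

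For (1), the relations $D^k\phi(a_i)=D^k\phi(b_i)=0=D^k\psi(a_i)=D^k\psi(b_i)$ for $0\leq k\leq m$ give $D^k\mathcal{F}_i$ and $D^k\mathcal{G}_i$ the required values at both endpoints, while Proposition~\ref{perturb}(4) and the definition of $\mathcal{H}_i$ give $\mathcal{H}_i(a_i)=H(a_i)$ and $\mathcal{H}_i(b_i)=H(b_i)$. For $1\leq k\leq m$ the identity \eqref{HigherHoriz} applied to $(\mathcal{F}_i,\mathcal{G}_i,\mathcal{H}_i)$ at $a_i$ or $b_i$ forces $D^k\mathcal{H}_i$ to equal the polynomial expression \eqref{Hpoly} evaluated on the now-matching derivatives of $F$ and $G$, which by hypothesis~(2) of Theorem~\ref{newthm} is exactly $H^k$ at that endpoint. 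For (2) and (3), decompose
$$
D^k\mathcal{F}_i(x)-F^k(a_i)=\bigl(D^k f(x)-D^k f(a_i)\bigr)+D^k\phi(x)
$$
and apply \eqref{modctya} and Proposition~\ref{perturb}(2) to bound each summand by $C\omega(b_i-a_i)$ and $\widetilde{C}\omega(b_i-a_i)$ respectively; the analogue for $\mathcal{G}_i$ and the analogue at $k=m$ for differences of $D^m\mathcal{F}_i,D^m\mathcal{G}_i$ via Proposition~\ref{perturb}(3) yield both estimates.

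The main obstacle is (4): producing an $\omega$-modulus bound for $D^m\mathcal{H}_i$ with constant $C_3$ independent of $i$. Differentiating $\mathcal{H}_i'=2(\mathcal{F}_i'\mathcal{G}_i-\mathcal{G}_i'\mathcal{F}_i)$ via Leibniz yields
$$
D^m\mathcal{H}_i = 2\sum_{j=0}^{m-1}\binom{m-1}{j}\bigl(D^{m-j}\mathcal{F}_i\cdot D^{j}\mathcal{G}_i - D^{m-j}\mathcal{G}_i\cdot D^{j}\mathcal{F}_i\bigr),
$$
so it suffices to estimate $|u(x)v(x)-u(y)v(y)|$ for each pair $(u,v)$ of derivatives with orders summing to $m$. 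The delicate summands are those with $j=0$, where one factor is the merely $\omega$-continuous derivative $D^m\mathcal{F}_i$ (or $D^m\mathcal{G}_i$) and the other is the bounded Lipschitz function $\mathcal{G}_i$ (resp.\ $\mathcal{F}_i$); splitting
$$
|D^m\mathcal{F}_i(x)\mathcal{G}_i(x)-D^m\mathcal{F}_i(y)\mathcal{G}_i(y)| \leq |D^m\mathcal{F}_i(x)-D^m\mathcal{F}_i(y)|\,\|\mathcal{G}_i\|_\infty + \|D^m\mathcal{F}_i\|_\infty\,|\mathcal{G}_i(x)-\mathcal{G}_i(y)|
$$
controls the first piece by property~(3) and the second using the uniform Lipschitz bound on $\mathcal{G}_i$ together with the inequality $|x-y|\leq (\mathrm{diam}(K)/\omega(\mathrm{diam}(K)))\omega(|x-y|)$ coming from Lemma~\ref{omegaovert}. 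All remaining summands have both factors in $C^{1,\omega}([a_i,b_i])$ with Lipschitz constants bounded in terms of $\|D^k f\|_\infty+\|D^k g\|_\infty$ and $\widetilde{C}\omega(\mathrm{diam}(K))$, so the corresponding product is Lipschitz with a uniform constant, and the same Lemma~\ref{omegaovert} conversion upgrades the Lipschitz estimate to an $\omega$-modulus estimate. Since Proposition~\ref{perturb} guarantees $|D^k\phi|,|D^k\psi|\leq \widetilde{C}\omega(\mathrm{diam}(K))$ on every $[a_i,b_i]$, all the infinity norms involved in these estimates are uniform in $i$, so the resulting constant $C_3$ is uniform as required.
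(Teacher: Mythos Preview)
Your proof is correct and follows essentially the same approach as the paper's: construct $(\mathcal{F}_i,\mathcal{G}_i)=(f+\phi,g+\psi)$ via Proposition~\ref{perturb}, define $\mathcal{H}_i$ as the horizontal lift, verify endpoint matching through hypothesis~(2) of Theorem~\ref{newthm}, and for item~(4) expand $D^m\mathcal{H}_i$ by Leibniz, split each product $uv$ as $(u(x)-u(y))v(x)+u(y)(v(x)-v(y))$, and convert the resulting Lipschitz bounds to $\omega$-bounds through Lemma~\ref{omegaovert}. The paper defers parts of (1) and (2) to the corresponding lemma in \cite{PSZ19}, whereas you spell them out, but the arguments are the same.
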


\begin{proof}
Fix $i \in \mathbb{N}$.
Set $\mathcal{F}_{i}=f+\phi$ and $\mathcal{G}_{i}=g+\psi$ where $\phi$ and $\psi$ are chosen using Proposition \ref{perturb} for the interval $[a_{i},b_{i}]$ and $f$ and $g$ are the $C^{m,\omega}$ Whitney extensions of $F$ and $G$ respectively chosen earlier.
Define $\mathcal{H}_i$ to be the horizontal lift of $\mathcal{F}_i$ and $\mathcal{G}_i$
with starting height $H(a_i)$:
\begin{equation}
\label{Hlift}
\mathcal{H}_i(x) 
:= H(a_i) + 2\int_{a_i}^x (\mathcal{F}_i'\mathcal{G}_i - \mathcal{F}_i\mathcal{G}_i') 
\quad \text{for all } x \in [a_i,b_i].
\end{equation}
Differentiating $\mathcal{H}_i$ gives for any $1 \leq k \leq m$ the equation
\begin{equation}
\label{HorizH}
        D^k\mathcal{H}_i = 2 \sum_{j=0}^{k-1}  \binom{k-1}{j}  \left(D^{k-j}\mathcal{F}_i D^j\mathcal{G}_i- D^{k-j}\mathcal{G}_i D^j\mathcal{F}_i \right),
\end{equation}
on $[a_i,b_i]$ by the Leibniz rule.

The proofs of {\em (1)} and {\em(2)} follow those of \cite[Lemma 6.7 {\em (1), (2)}]{PSZ19} with $\beta$ and the estimate (6.5) therein replaced here by $\omega$ and \eqref{modctya} respectively.
For (3) we use \eqref{modctya} and Proposition \ref{perturb} to estimate as follows,
\begin{align*}
|D^m \mathcal{F}_{i}(x)-D^{m} \mathcal{F}_{i}(y)| &\leq |D^{m}f(x)-D^{m}f(y)|+|D^{m}\phi(x)-D^{m}\phi(y)|\\
&\leq C\omega(|x-y|)+\widetilde{C}\omega(|x-y|)\\
&\leq 2\widetilde{C}\omega(|x-y|).
\end{align*}
A similar argument applies for $|D^m \mathcal{G}_{i}(x)-D^{m} \mathcal{G}_{i}(y)|$.

The horizontality of $(\mathcal{F}_{i}, \mathcal{G}_{i}, \mathcal{H}_{i})$ follows from the definition of the horizontal lift. 
It remains to verify that $\mathcal{F}_{i}, \mathcal{G}_{i}$, and $\mathcal{H}_{i}$ are all of class $C^{m,\omega}$.
Clearly, $\mathcal{F}_{i}$ and $\mathcal{G}_{i}$ are of class $C^{m,\omega}$ since $f,g,\phi$, and $\psi$ are. To see that $\mathcal{H}_{i}$ is of class $C^{m,\omega}$, observe that it is $C^{m}$ and that $D^{m}\mathcal{H}_{i}$ is a linear combination of terms of the form $D^{l}\mathcal{F}_{i}D^{m-l}\mathcal{G}_{i}$ for $0\leq l\leq m$, where both the number of terms and the coefficients are bounded. We then observe that for any $c<d$ in $[a_{i},b_{i}]$,
\begin{align*}
& |D^{l}\mathcal{F}_{i}(d)D^{m-l}\mathcal{G}_{i}(d)-D^{l}\mathcal{F}_{i}(c)D^{m-l}\mathcal{G}_{i}(c)| \\
&\qquad \leq |D^{m-l}\mathcal{G}_{i}(d)||D^{l}\mathcal{F}_{i}(d)-D^{l}\mathcal{F}_{i}(c)| + |D^{l}\mathcal{F}_{i}(c)||D^{m-l}\mathcal{G}_{i}(d)-D^{m-l}\mathcal{G}_{i}(c)|.
\end{align*}
Note $D^{l}\mathcal{F}_{i},D^{m-l}\mathcal{G}_{i}$ are bounded independently of $i$, which follows from {\em (2)} and the fact that $F^{k}$ and $G^{k}$ are continuous on the compact set $K$. If $l<m$ we estimate
\[|D^{l}\mathcal{F}_{i}(d)-D^{l}\mathcal{F}_{i}(c)| \leq \int_{c}^{d} |D^{l+1}\mathcal{F}_{i}(t)|\dd t \]
which is bounded by a multiple of $d-c$, and hence by a multiple of $\omega(|d-c|)$ using Lemma \ref{omegaovert}. If $l=m$ then the estimate of $|D^{l}\mathcal{F}_{i}(d)-D^{l}\mathcal{F}_{i}(c)|$ follows directly from Proposition \ref{perturb}. A similar argument gives the estimate of $|D^{m-l}\mathcal{G}_{i}(d)-D^{m-l}\mathcal{G}_{i}(c)|$. This shows $(\mathcal{F}_{i}, \mathcal{G}_{i}, \mathcal{H}_{i})$ is a $C^{m,\omega}$ horizontal curve. This also proves (4), since the relevant estimates were independent of $i$.
\end{proof}

Recall $I=[\min K,\ \max K]$ and $I\setminus K = \cup_{i\geq 1} (a_{i},b_{i})$, where the intervals $(a_{i},b_{i})$ are disjoint and $a_{i},b_{i}\in K$.

\begin{proposition}\label{conclusion}
Define the curve $\Gamma = (\mathcal{F},\mathcal{G},\mathcal{H}) \colon I\to \mathbb{H}^{1}$ as follows:
\[ \Gamma(x) := (F(x),G(x),H(x)) \quad \mbox{if }x \in K\]
and
\[\Gamma(x) := (\mathcal{F}_i(x),\mathcal{G}_i(x),\mathcal{H}_i(x)) \quad \mbox{if }x \in (a_i,b_i) \mbox{ for some }i \geq 1.\]
Then $\Gamma$ is a $C^{m,\omega}$ horizontal curve in $\bbH^{1}$ with
\[D^k \mathcal{F}(x)=F^{k}(x),\qquad D^k \mathcal{G}(x)=G^{k}(x), \qquad D^k \mathcal{H}(x)=H^{k}(x)\]
 for all $x\in K$ and $0\leq k\leq m$.
\end{proposition}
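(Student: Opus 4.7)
The proof reduces to three tasks: verifying that $\Gamma$ is horizontal, verifying that $\Gamma$ is $C^m$ on $I$ with $D^k\Gamma|_K = (F^k,G^k,H^k)$ for $0 \leq k \leq m$, and verifying the $\omega$-modulus of continuity for $D^m\Gamma$ on all of $I$. Horizontality on each component $(a_i,b_i)$ is built into the construction via the lift \eqref{Hlift}; at points of $K$, the horizontality identity reduces by continuity to hypothesis (2) with $k=1$. Hence once $C^1$ regularity is established, horizontality holds everywhere on $I$.

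The main work is proving $C^m$ differentiability at each $a \in K$ with $D^k\Gamma(a) = (F^k(a),G^k(a),H^k(a))$. Inside each closed interval $[a_i,b_i]$ the desired regularity is already provided by Lemma~\ref{buildcurve}, so the interesting case is a point $a \in K$ which is an accumulation point of $I \setminus K$. For such an $a$, I would establish the Taylor-type bound
\[
|D^k\mathcal{F}(x) - T_a^{m-k}F^k(x)| \leq C\omega(|x-a|)\,|x-a|^{m-k}
\]
uniformly for $x \in I$ near $a$ and $0 \leq k \leq m$, together with the analogous bounds for $\mathcal{G}$ and $\mathcal{H}$. If $x \in K$ this is exactly the Whitney field hypothesis (1). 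If $x \in (a_i,b_i)$, I would split
\[
D^k\mathcal{F}(x) - T_a^{m-k}F^k(x) = \bigl(D^k\mathcal{F}_i(x) - T_{a_i}^{m-k}F^k(x)\bigr) + \bigl(T_{a_i}^{m-k}F^k(x) - T_a^{m-k}F^k(x)\bigr),
\]
where Lemma~\ref{buildcurve}(1) identifies $T_{a_i}^{m-k}F^k$ with the degree-$(m-k)$ Taylor polynomial of $D^k\mathcal{F}_i$ at $a_i$. The first bracketed term is controlled via Lemma~\ref{claimtaylorest} applied to $\mathcal{F}_i \in C^{m,\omega}([a_i,b_i])$, with constant uniform in $i$ thanks to the uniform bounds in Lemma~\ref{buildcurve}(2), (3); the second is controlled by a standard Taylor polynomial comparison using the Whitney field property of $F$ on $K$ at the points $a, a_i \in K$. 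Observing that $|a_i - a| \leq |x-a|$ and $|x-a_i| \leq |x-a|$ for $x \in (a_i,b_i)$ near $a$, all distances collapse to multiples of $|x-a|$. Identical reasoning, with the uniform constant $C_3$ from Lemma~\ref{buildcurve}(4) in place of $2\widetilde{C}$, handles $\mathcal{G}$ and $\mathcal{H}$. The Taylor estimates then upgrade in the standard way to pointwise differentiability with the prescribed derivatives.

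For the $\omega$-modulus of $D^m\Gamma$, a trichotomy suffices. If $x,y$ lie in the same $[a_i,b_i]$, apply Lemma~\ref{buildcurve}(3), (4). If $x,y \in K$, apply the Whitney field hypothesis with $k=m$. In mixed cases, insert an intermediate point of $K$ between $x$ and $y$ (for instance an endpoint $a_i$ or $b_i$ of the relevant component) and apply the triangle inequality, using monotonicity of $\omega$ to collapse intermediate distances to $|x-y|$. The main obstacle throughout is coordinating all of these estimates uniformly across the countable family of components $(a_i,b_i)$; this is resolved by the uniformity in $i$ of the constants $\widetilde{C}$ and $C_3$ in Lemma~\ref{buildcurve}, which allows all Taylor remainder and modulus estimates to be summed or telescoped without loss.
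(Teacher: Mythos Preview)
Your proposal is correct and follows essentially the same route as the paper. The paper itself defers the $C^m$ and horizontality verifications to \cite[Proposition~6.8]{PSZ19} and then carries out exactly your trichotomy for the $\omega$-modulus of $D^m\Gamma$; your Taylor-remainder sketch for the $C^m$ part is precisely what that citation unpacks (with the minor caveat that when $x\in(a_i,b_i)$ you should split via the \emph{nearer} endpoint $a_i$ or $b_i$ so that both intermediate distances are bounded by $|x-a|$).
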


\begin{proof}
Clearly the curve $\Gamma$ is $C^{m,\omega}$ in the subintervals $(a_{i}, b_{i})$. Define maps $\gamma^k$ on $K$ for $0 \leq k \leq m$ by $\gamma^k=(F^k,G^k,H^k)$. 
With this notation, we have to show that $\Gamma$ is a $C^{m,\omega}$ horizontal curve and $D^k \Gamma|_{K}=\gamma^{k}$ for $0\leq k\leq m$.

The proofs that $\Gamma$ is a $C^{m}$ horizontal curve and that $D^k \Gamma|_{K}=\gamma^{k}$ for $0\leq k\leq m$
are identical to the proof of these facts in \cite[Proposition 6.8]{PSZ19}.
It remains to verify that there is a constant $\hat{C}>0$ such that
\begin{equation}
    \label{e-unifcont}
|D^{m}\Gamma(x)-D^{m}\Gamma(y)|\leq \hat{C}\omega(|x-y|)
\quad
\mbox{for all }x,y \in I.
\end{equation}
We do so by dividing into several cases.

\emph{Suppose $x,y \in K$.} Then \eqref{e-unifcont} follows from the fact that $F,G,H$ are Whitney fields of class $C^{m,\omega}$ with $\hat{C} = C$.

\emph{Suppose $x\in K$ and $y\notin K$.} Then $y\in (a_{i},b_{i})$ for some $i\geq 1$. Assume $x\leq a_{i}$, as the argument is similar if $x\geq b_{i}$. Using Lemma \ref{buildcurve}
and the definition of a Whitney field of class $C^{m,\omega}$,
we may choose a constant $\hat{C} \geq 0$ depending only on $\tilde{C}$ and $C$ such that
\begin{align*}
|D^{m}\Gamma(x)-D^{m}\Gamma(y)| \leq |\gamma^{m}(x)-\gamma^{m}(a_{i})|+|D^{m}\Gamma(a_{i})-D^{m}\Gamma(y)| \leq \hat{C} \omega(|x-y|).
\end{align*}

\emph{Suppose $x, y\notin K$ and $x,y\in (a_{i},b_{i})$ for some $i\geq 1$.} Then \eqref{e-unifcont} 
follows directly from Lemma \ref{buildcurve} with $\hat{C} = \max \{2\sqrt{3}\tilde{C}, \sqrt{3} C_3\}$.

\emph{Suppose $x, y\notin K$, $x\in (a_{i},b_{i})$ and $y\in (a_{j},b_{j})$ with $i\neq j$.} Assume $b_{i}<a_{j}$ as the opposite case is similar. Then we estimate as above
\begin{align*}
|D^{m}\Gamma(x)-D^{m}\Gamma(y)| &\leq |D^{m}\Gamma(x)-D^{m}\Gamma(b_{i})| + |\gamma^{m}(b_{i})-\gamma^{m}(a_{j})| \\
& \qquad + |D^{m}\Gamma(a_{j})-D^{m}\Gamma(y)|\\
&\leq \hat{C} \omega(|x-y|)
\end{align*}
for some $\hat{C}>0$ depending only on $\tilde{C}$ and $C$.
\end{proof}

\begin{proof}[Proof of Theorem \ref{newthm}]
Theorem \ref{newthm} follows directly from Proposition \ref{conclusion}.
\end{proof}

\begin{proof}[Proof of Theorem \ref{iffomega}]
Proposition \ref{nec} and Theorem \ref{newthm} yield Theorem \ref{iffomega}.
\end{proof}

\section{Construction Justifying Conditions for a $C^{m,\omega}$ Whitney Theorem}\label{sectionjustify}

In this section we motivate the conditions given in Theorem \ref{iffomega}. In particular, we show that Theorem \ref{iffomega} does not hold if one uses the definition of velocity given in \cite{PSZ19, Zim21}. Recall from these references that, for collections of continuous real-valued functions $(F,G,H)$ on a compact set $K$ and $a,b\in K$ with $a<b$, the velocity was defined as
\begin{equation}\label{Vhatab}
V_1 (a,b):= (b-a)^{2m} + (b-a)^{m} \int_{a}^{b} \left(|(T_{a}^mF)'|+|(T_{a}^mG)'| \right).
\end{equation}

The following is a rewording of Theorem~1.1 from \cite{PSZ19}.
Note in particular the drop in regularity for the resulting extended curve. Also, if $\omega$ is a modulus of continuity satisfying our conditions then so is $\omega^{\alpha}$ for any $0<\alpha\leq 1$.

\begin{theorem}[\cite{PSZ19}]
\label{t-HeisWhitLip}
Suppose $K \subseteq \R$ is compact and 
$F=(F^k)_{k=0}^m$, $G=(G^k)_{k=0}^m$, and $H=(H^k)_{k=0}^m$ are collections of continuous, real valued functions on $K$.
If there is a constant $\hat{C}>0$ such that
\begin{enumerate}
    \item 
    $F$, $G$, and $H$ are Whitney fields of class $C^{m,\omega}$ on $K$,
    \item for every $1 \leq k \leq m$, the following holds on $K$: $$
        H^k = 2 \sum_{i=0}^{k-1}  \binom{k-1}{i}  \left(F^{k-i}G^i- G^{k-i}F^i \right),
    $$
    \item 
    and 
\begin{equation*}
\left| \frac{A(a,b)}{V_1(a,b)} \right| \leq \hat{C}\omega(b-a)
\quad \text{for all } a,b \in K \text{ with } a<b,
\end{equation*}
\end{enumerate}
then there is a horizontal curve $\Gamma \in C^{m,\sqrt{\omega}}(\R,\R^3)$  
satisfying $D^k \Gamma|_K = (F^k,G^k,H^k)$
for $0 \leq k \leq m$
\end{theorem}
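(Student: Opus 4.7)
The plan is to derive Theorem~\ref{t-HeisWhitLip} as a short corollary of Theorem~\ref{iffomega} applied with the modulus $\sqrt{\omega}$ in place of $\omega$. First I would check that $\sqrt{\omega}$ itself satisfies the requirements of Definition~\ref{defmodulus}: continuity, monotonicity, $\sqrt{\omega(0)}=0$, and non-triviality are immediate, and concavity follows because $u\mapsto\sqrt{u}$ is concave and increasing and $\omega$ is concave (the composition of an increasing concave function with a concave function is concave).

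Next, since $\omega$ is continuous on the compact interval $[0,\mathrm{diam}(K)]$, it is bounded there, so one can fix a constant $M_K\geq 1$ with $\omega(t)\leq M_K\sqrt{\omega(t)}$ for all $t\in[0,\mathrm{diam}(K)]$. This single estimate drives the whole argument. Condition~(1) of Theorem~\ref{t-HeisWhitLip} is then strictly stronger than condition~(1) of Theorem~\ref{iffomega} relative to $\sqrt{\omega}$, because the Whitney field inequalities with $\omega$ on the right immediately imply the same inequalities with $\sqrt{\omega}$ on the right, up to an overall factor of $M_K$. Condition~(2) involves only the horizontality polynomial identity and is identical in both theorems.

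For condition~(3), I would expand $\hat{C}\omega(b-a)V_1(a,b)$ and compare it term by term with $V_{\sqrt{\omega}}(a,b)$. The first piece $\hat{C}\omega(b-a)(b-a)^{2m}$ equals $\hat{C}(\sqrt{\omega(b-a)})^{2}(b-a)^{2m}$, which matches the first term of $V_{\sqrt{\omega}}$. For the second piece, applying $\omega(t)\leq M_K\sqrt{\omega(t)}$ gives
\[\hat{C}\,\omega(b-a)(b-a)^m\int_a^b\bigl(|(T_a^mF)'|+|(T_a^mG)'|\bigr)\leq \hat{C}M_K\sqrt{\omega(b-a)}\,(b-a)^m\int_a^b\bigl(|(T_a^mF)'|+|(T_a^mG)'|\bigr),\]
which matches the second term of $V_{\sqrt{\omega}}(a,b)$ up to the constant $\hat{C}M_K$. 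Adding the two pieces yields $|A(a,b)|\leq \hat{C}M_K\, V_{\sqrt{\omega}}(a,b)$ for all $a,b\in K$ with $a<b$, which is condition~(3) of Theorem~\ref{iffomega} for the modulus $\sqrt{\omega}$.

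Applying Theorem~\ref{iffomega} with $\sqrt{\omega}$ then furnishes a horizontal curve $\Gamma\in C^{m,\sqrt{\omega}}(\mathbb{R},\mathbb{R}^3)$ satisfying $D^k\Gamma|_K=(F^k,G^k,H^k)$ for $0\leq k\leq m$, as required. I do not expect any genuine obstacle: the entire reduction is algebraic, and the drop in regularity from $\omega$ to $\sqrt{\omega}$ is transparent from the extra factor $\sqrt{\omega(b-a)}$ one finds when rewriting $\omega(b-a)V_1(a,b)$ in the form of $V_{\sqrt{\omega}}(a,b)$.
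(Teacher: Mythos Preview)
Your proposal is correct, and it takes a different route from the paper. The paper does not actually prove Theorem~\ref{t-HeisWhitLip} here: it is quoted as a result from \cite{PSZ19}, with the remark that it ``follows immediately from the same proof'' together with a pointer to \cite{Zim21}. In other words, the paper's justification is to rerun the direct $C^m$ construction of \cite{PSZ19} and observe that the estimates one obtains are of class $C^{m,\sqrt{\omega}}$.

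Your approach instead derives the statement internally, as a corollary of the main Theorem~\ref{iffomega} applied with the modulus $\sqrt{\omega}$. The reduction is clean: the key observation that $\omega(b-a)V_1(a,b)$ is dominated by a constant multiple of $V_{\sqrt{\omega}}(a,b)$ (with the first term matching exactly and the second term controlled via $\omega\leq M_K\sqrt{\omega}$ on $[0,\mathrm{diam}(K)]$) is exactly right, and the passage from $C^{m,\omega}$ Whitney fields to $C^{m,\sqrt{\omega}}$ Whitney fields is the same boundedness trick. This route is arguably preferable in the context of the present paper, since it is self-contained and exhibits Theorem~\ref{t-HeisWhitLip} as a genuine weakening of Theorem~\ref{iffomega}; the paper's route, by contrast, relies on the reader being willing to revisit the earlier construction.
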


While this is not the exact statement of Theorem~1.1 in \cite{PSZ19}, it follows immediately from the same proof. For a brief explanation, see Theorem~2.6 in \cite{Zim21}.

Compare condition (3) here with condition (3) from Theorem~\ref{iffomega}.
According to the following example, condition (3) in Theorem~\ref{t-HeisWhitLip} is strictly weaker than condition (3) in Theorem~\ref{iffomega}.
More precisely, we will construct a compact set $K \subset \mathbb{R}$ and collections $F=(F^k)_{k=0}^m$, $G=(G^k)_{k=0}^m$, and $H=(H^k)_{k=0}^m$ of continuous, real valued functions on $K$ which satisfy conditions (1) and (2) of Theorem~\ref{iffomega} and condition (3) of Theorem~\ref{t-HeisWhitLip}
but for which no horizontal $C^{m,\omega}$ extension exists.


\begin{theorem}[Restatement of Theorem \ref{motivation}]
Suppose $\omega$ is a modulus of continuity.
There is a compact and perfect set $K \subset \mathbb{R}$, a constant $\hat{C} > 0$, and collections $F=(F^k)_{k=0}^m$, $G=(G^k)_{k=0}^m$, and $H=(H^k)_{k=0}^m$ of continuous, real valued functions on $K$
so that
\begin{enumerate}
    \item 
    $F$, $G$, and $H$ are Whitney fields of class $C^{m,\omega}$ on $K$,
    \item for every $1 \leq k \leq m$ and $x \in K$, the following holds on $K$:
    $$
        H^k = 2 \sum_{i=0}^{k-1}  \binom{k-1}{i}  \left(F^{k-i}G^i- G^{k-i}F^i \right),
    $$
    \item 
    and 
\begin{equation*}
\left| \frac{A(a,b)}{V_1(a,b)} \right| \leq \hat{C}\omega(b-a)
\quad \text{for all } a,b \in K \text{ with } a<b,
\end{equation*}
\end{enumerate}
but there is no horizontal curve $\Gamma\in C^{m,\omega^\alpha}(\R,\R^3)$ 
satisfying $\Gamma|_K = (F,G,H)$
for any $\alpha \in (\tfrac12,1]$.
\end{theorem}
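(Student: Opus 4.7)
The plan is to concentrate the entire area discrepancy into the third (height) coordinate. Set $F^k = G^k = H^k \equiv 0$ on $K$ for every $1\le k\le m$; this automatically satisfies condition (2) and forces the Taylor polynomials $T_a^m F$ and $T_a^m G$ at any $a\in K$ to reduce to the constants $F^0(a)$ and $G^0(a)$. Further set $F^0 \equiv G^0 \equiv 0$ on $K$. Then $V_1(a,b) = (b-a)^{2m}$ and $A(a,b) = H^0(b) - H^0(a)$, so all three conditions reduce to controlling one scalar function $H^0$ on $K$.

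I would construct $K$ explicitly. Fix $\ell_n = 4^{-n}$, $a_n = 2^{-n}$, $b_n = a_n + \ell_n$, and take $C_n \subset [a_n, a_n+\varepsilon_n]$, $D_n \subset [b_n-\varepsilon_n, b_n]$ to be compact perfect (Cantor-type) subsets containing the endpoints $a_n$, $b_n$ respectively, with $0<\varepsilon_n<\ell_n/4$. Set $K := \{0\}\cup \bigcup_{n\ge 1}(C_n\cup D_n)$; this is compact since $a_n, b_n\to 0$, and perfect since each $C_n, D_n$ is perfect and $0$ accumulates through the $a_n$. Define $H^0 \equiv 0$ on $\{0\}\cup \bigcup_n C_n$ and $H^0 \equiv \omega(\ell_n)\ell_n^{2m}$ on $D_n$.

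Conditions (1) and (3) then follow by a short case analysis. Within a single pair $a\in C_n$, $b\in D_n$: $|b-a|\approx \ell_n$, $|H^0(b)-H^0(a)| = \omega(\ell_n)\ell_n^{2m}$, and both the Whitney bound $|H^0(b)-H^0(a)|\le C\omega(|b-a|)|b-a|^m$ and the area bound $|A(a,b)|\le \hat{C}\omega(|b-a|)(b-a)^{2m}$ hold because $\ell_n^m\le 1$. Across different scales, the double-exponential choice $\ell_n = (2^{-n})^2$ makes separations $|b-a|\gtrsim 2^{-\min(n,n')}$ much larger than the pair widths, and monotonicity of $\omega(\ell)\ell^m$ together with concavity of $\omega$ take care of the cross-scale estimates. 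In particular, $|A(a_n,b_n)| = \omega(\ell_n)\ell_n^{2m}$ exactly saturates the weaker velocity bound.

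Finally, suppose for contradiction that a horizontal curve $\Gamma=(\gamma_1,\gamma_2,\gamma_3)\in C^{m,\omega^\alpha}(\bbR,\bbR^3)$ satisfies $\Gamma|_K = (0,0,H^0)$ for some $\alpha\in(\tfrac12, 1]$. Because $K$ is perfect and $\gamma_1, \gamma_2$ vanish on $K$, computing derivatives at any point of $K$ as sequential limits within $K$ shows by induction on $k$ that $D^k\gamma_i|_K \equiv 0$ for $0\le k\le m$ and $i=1,2$. The Taylor estimate \eqref{tayloroemgaest} applied with modulus $\omega^\alpha$, centered at $a_n$, and using the vanishing Taylor polynomial then gives $|\gamma_i(x)|\le C\omega(\ell_n)^\alpha \ell_n^m$ and $|\gamma_i'(x)| \le C\omega(\ell_n)^\alpha \ell_n^{m-1}$ on $[a_n, b_n]$, so horizontality of $\Gamma$ forces
\[
\omega(\ell_n)\ell_n^{2m} = \bigl|H^0(b_n) - H^0(a_n)\bigr| = \Bigl|2\int_{a_n}^{b_n}(\gamma_1'\gamma_2 - \gamma_2'\gamma_1)\Bigr| \le C'\omega(\ell_n)^{2\alpha}\ell_n^{2m},
\]
that is $\omega(\ell_n)^{1-2\alpha} \le C'$. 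Since $\alpha > 1/2$ makes the exponent negative and $\omega(\ell_n)\to 0$, the left side diverges as $n\to\infty$, giving the contradiction. The main obstacle is arranging that $K$ is perfect without destroying the Whitney/area estimates; the double-exponential gap $\ell_n = (2^{-n})^2$ combined with the Cantor-type thickenings $C_n, D_n$ of scale $\varepsilon_n \ll \ell_n$ handles this cleanly.
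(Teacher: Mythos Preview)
Your argument is correct and follows essentially the same strategy as the paper: set $F\equiv G\equiv 0$ with all higher $H^k\equiv 0$, choose $H^0$ to jump by $\asymp \omega(\ell_n)\ell_n^{2m}$ across gaps of width $\ell_n$, then use perfectness of $K$ to force all derivatives of $\gamma_1,\gamma_2$ to vanish on $K$ and derive the contradiction from horizontality and the $C^{m,\omega^\alpha}$ Taylor estimate. The paper streamlines your construction by taking $K$ to be a union of closed intervals accumulating at a single point, which makes perfectness automatic and eliminates the need for the Cantor-type thickenings and the auxiliary scale $\varepsilon_n$.
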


\begin{proof}[Proof of Theorem \ref{motivation}]
Define 
$$
K = \bigcup_{n = 0}^\infty [c_n,d_n] \cup \{1\}
\quad
\text{where }
[c_n,d_n] := \left[ 1-2^{-n},1 - \tfrac34 2^{-n} \right].
$$
Define the real valued function $H^0$ on $K$ as follows:
$$
H^0(t) = \begin{cases}
4^{-mn}\omega(2^{-(n+2)}) & \text{if } t \in [c_n,d_n] \\
0 & \text{if } t = 1.
\end{cases}
$$
Set $F^k = G^k = 0$ on $K$ for $0 \leq k \leq m$, and set $H^k = 0$ on $K$ for $1 \leq k \leq m$.
We will first verify that the collections $F=(F^k)_{k=0}^m$, $G=(G^k)_{k=0}^m$, and $H=(H^k)_{k=0}^m$ are Whitney fields of class $C^{m,\omega}$.
(The collections $F=(F^k)_{k=0}^m$ and $G=(G^k)_{k=0}^m$ are clearly Whitney fields of class $C^{m,\omega}$ on $K$ since they are constantly 0 there.)

Since $T^m_aH = H^0$ for any $a \in K$, we need only check that $|H^0 (b) - H^0(a)|/ |b-a|^m$ is uniformly bounded by $C\omega(|b-a|)$ for some constant $C>0$ and all $a,b \in K$.
This is similar to the proof of Proposition~4.1 in \cite{PSZ19}.
Fix $a,b \in K$. 
If $a$ and $b$ lie in the same interval $[c_k,d_k]$, then $|H^0 (b) - H^0(a)| = 0$.
If $a$ and $b$ lie in different intervals $[c_k,d_k]$ and $[c_\ell,d_\ell]$ (say $\ell > k$), then, since $\omega$ is non-decreasing and since $|b-a| \geq (c_\ell - d_k) \geq (c_{k+1} - d_k) = 2^{-(k+2)}$,
we get
\begin{align*}
    \frac{\left| H^0 (b) - H^0(a)\right|}{|b-a|^m}
    &\leq
    \frac{4^{-mk}\omega(2^{-(k+2)}) - 4^{-m\ell}\omega(2^{-(\ell+2)})}{(c_\ell - d_k)^m}\\
    &\leq
    \frac{4^{-mk}}{(2^{-(k+2)})^m}  \omega(2^{-(k+2)})\\
    &=
    4^m \left(\tfrac12\right)^{mk} \omega(2^{-(k+2)})
    \leq 
    4^m \omega(|b-a|).
\end{align*}
A similar argument holds when either $a$ or $b$ is 1, so $H$ is a Whitney field of order $C^{m,\omega}$ on $K$.
We will now verify the following facts.
\begin{enumerate}
\item For any $a,b \in K$,
$$
\left| \frac{A(a,b)}{V_1(a,b)} \right| \leq 16^m \omega(|b-a|).
$$
\item There is no horizontal curve $\Gamma \in C^{m,\omega^\alpha}(\mathbb{R},\mathbb{R}^3)$ such that $\Gamma = (F,G,H)$ on $K$ for any $\alpha \in \left(\tfrac12 , 1 \right]$.
\end{enumerate}

\begin{proof}[Proof of (1)]
Fix $a,b \in K \setminus \{1\}$ with $a < b$.
By definition, $A(a,b) = H^0(b) - H^0(a)$ and $V_1(a,b) = (b-a)^{2m}$.
Now, there are some
$k,\ell \in \mathbb{N}$
so that $a \in [c_k,d_{k}]$ and $b \in [c_\ell,d_{\ell}]$
and $k \leq \ell$.
If $k = \ell$, there is nothing to prove 
since $H^0$ is constant on the interval $[c_k,d_k]$.
Suppose $k < \ell$.
As above, we have 
\begin{align*}
\left| \frac{A(a,b)}{V_1(a,b)} \right|
=
\frac{|H^0(b) - H^0(a)|}{(b-a)^{2m}}
\leq 
\frac{4^{-mk}}{(2^{-(k+2)})^{2m}} \omega(2^{-(k+2)})
\leq
16^m \omega(b-a)
\end{align*}
since $\omega$ is non-decreasing.
A similar proof works when $b = 1$.
\end{proof}

\begin{proof}[Proof of (2)]
Suppose $\alpha \in \left(\tfrac12, 1\right]$ and $\Gamma \in C^{m,\omega^\alpha}(\mathbb{R},\mathbb{R}^3)$ is a horizontal curve satisfying $\Gamma|_K = (F,G,H)$.
We will show that there is no constant $C > 0$ such that $|A(a,b)| \leq C V_{\omega^\alpha}(a,b)$
for all $a,b \in K$ with $a<b$, and this will contradict condition {\em (3)} in Proposition~\ref{nec}. Here, we will use the fact that $\omega^{\alpha}$ is also a modulus of continuity satisfying our hypotheses.

Write $\Gamma = (f,g,h)$. 
Since $\Gamma$ must be constant and lie on the $z$-axis on each interval $[c_n,d_n]$ and since $\Gamma$ is $C^m$, it follows that 
$D^k f = D^k g = 0$ on $K$ for $0 \leq k \leq m$.
In particular, this means that
$T_a^mf$ and $T_a^mg$ are constantly equal to 0 for any $a \in K$.
Therefore, Taylor's theorem gives a constant $C>0$ such that
\begin{enumerate}
    \item $|f(x)| \leq C\omega(|x-a|)^\alpha (b-a)^m$ and $|g(x)| \leq C\omega(|x-a|)^\alpha (b-a)^m$,
    \item $|f'(x)| \leq C\omega(|x-a|)^\alpha (b-a)^{m-1}$ and $|g'(x)| \leq C\omega(|x-a|)^\alpha (b-a)^{m-1}$
\end{enumerate}
for any $a,b \in K$ and $x$ between $a$ and $b$.
Therefore, 
recalling 
$2^{-(n+2)} = (c_{n+1} - d_n)$
so that 
$(c_{n+1} - d_n)^{2m} = 4^{-m(n + 2)}$ for any $n \in \mathbb{N}$, we must have
\begin{align*}
\left| \frac{A(d_n,c_{n+1})}{V_{\omega^\alpha}(d_n,c_{n+1})} \right|
&=
\frac{h(d_n) - h(c_{n+1})}{\omega(c_{n+1}-d_n)^{2\alpha}(c_{n+1}-d_n)^{2m}}\\
&=
\frac{4^{-mn}\omega(2^{-(n+2)}) - 4^{-m(n+1)}\omega(2^{-(n+3)})}{\omega(2^{-(n+2)})^{2\alpha} 4^{-m(n+2)}}\\
&\geq 
\frac{4^{-mn} - 4^{-m(n+1)}}{4^{-m(n + 2)}} \cdot \frac{\omega(2^{-(n+2)})}{\omega(2^{-(n+2)})^{2\alpha}}\\
&=
16^m (1-4^{-m}) \omega(2^{-(n+2)})^{1-2\alpha}.
\end{align*}
Since $\alpha > \tfrac12$, $1-2\alpha < 0$, and so $\omega(2^{-(n+2)})^{1-2\alpha} \to \infty$ as $n \to \infty$.
In other words, $\Gamma$ fails condition {\em (3)} of Proposition~\ref{nec}, and this leads to a contradiction.
Therefore, no such $\Gamma$ can exist.
\end{proof}
The proof of the theorem is complete.
\end{proof}

\end{document}